\documentclass[12pt]{amsart}

\usepackage{amsmath,amssymb,amsfonts,amsthm,amscd,amstext,amsxtra,amsopn,array,url,verbatim,mathrsfs,enumerate,anysize,soul}
\usepackage{graphicx}
\usepackage{amsmath,amssymb,amsfonts,amsthm,amssymb,amscd,url,amstext,amsxtra,amsopn
,txfonts}
\usepackage{verbatim}
\marginsize{2.25cm}{2.25cm}{2.25cm}{2.25cm}

\usepackage[dvipsnames,usenames]{xcolor}
\usepackage[colorlinks=true,urlcolor=Black,citecolor=Black,linkcolor=Black]{hyperref}
\usepackage{times}
\usepackage{mathabx}

\newtheorem{thrm}{Theorem}[section]
\newtheorem{lemma}[thrm]{Lemma}
\newtheorem{prop}[thrm]{Proposition}

\theoremstyle{definition}

\theoremstyle{remark}

\numberwithin{equation}{section}

\numberwithin{equation}{section}

\newcommand{\QQ}{\mathbb{Q}}
\newcommand{\CC}{\mathbb{C}}

\newcommand{\ZZ}{\mathbb{Z}}

\newcommand{\norm}{{\mathrm{N}}}

\newcommand{\ringO}{{\mathfrak{O}}}
\newcommand{\fa}{{\mathfrak{a}}}
\newcommand{\fb}{{\mathfrak{b}}}

\newcommand{\fp}{{\mathfrak{p}}}
\newcommand{\fm}{{\mathfrak{m}}}

\makeatletter
\def\imod#1{\allowbreak\mkern7mu({\operator@font mod}\,\,#1)}
\makeatother

\usepackage{mathtools}

\begin{document}

\title{Two dimensional value-distribution of cubic Hecke $L$-functions}

\thanks{Research of both authors is partially supported by NSERC}


\keywords{\noindent value-distribution, logarithms and logarithmic derivatives of $L$-functions, cubic characters}

\subjclass[2010]{11R42, 11M38, 11M41.}

\author{Amir Akbary}
\author{Alia Hamieh}

\address{Department of Mathematics and Computer Science \\
        University of Lethbridge \\
        Lethbridge, AB T1K 3M4 \\
        Canada}
        
        \address{Department of Mathematics and Statistics \\
        University of Northern British Columbia \\
        Prince George, BC V2N4Z9 \\
        Canada}

\email{amir.akbary@uleth.ca}
\email{alia.hamieh@unbc.ca}

\begin{abstract}

We establish the two-dimensional asymptotic distributions of the logarithm and logarithmic derivative of $L$-functions associated with a family of cubic Hecke characters. A crucial ingredient in the proof of our main result is an exponential decay estimate for the characteristic functions of the distributions.
\end{abstract}

\maketitle
\section{INTRODUCTION}

A classical result of Bohr and Jessen \cite{BJ} states that for fixed $\sigma>1/2$ and varying $t$, the values $\log{\zeta(\sigma+it)}$ of the Riemann zeta function $\zeta(\sigma+it)$ have a limiting distribution with a continuous density. The original proof of this theorem uses the properties of the sums of convex curves. In an important paper \cite{JW} Jessen and Wintner described a general framework for Bohr-Jessen's theorem. Their method uses ideas from probability theory such as sums of independent random variables and infinite convolutions together with Fourier analysis machinery.  Using this approach in  \cite[Theorem 19]{JW} they provided detailed information on the distribution function in Bohr-Jessen's theorem; for example, among other things they proved that the density function is continuous and possesses continuous partial derivatives of all orders. Hattori and Matsumoto \cite{HM} extended this line of research by studying the tail of the distribution of $\log\zeta(\sigma+it)$ for $\frac12<\sigma<1$. Their results were strengthened by Lamzouri \cite{lamzouri2} and Lamzouri, Lester, and Radziwi\l\l \; \cite{LLR} in which the authors also investigate the discrepancy between the distributions of $\log\zeta(\sigma+it)$ and that of an adequately chosen random variable $\log\zeta(\sigma,X)$. 

Over the past few decades similar probabilistic approaches have been used in studying the value distribution of other families of $L$-functions. A notable case is the family of quadratic twists in which one studies the values of certain real functions attached to quadratic characters $\chi_d$ (such as $\log{|L(s, \chi_d)|}$, ${\rm arg} {L(s, \chi_d)}$, or $L(1, \chi_d))$, as $d$ varies over the fundamental discriminants (see, for example, \cite{chowla-erdos}, \cite{elliott1}, \cite{elliott2}, \cite{GS}, 
and \cite{lamzouri2}).  Here $L(s, \chi)$ denotes the Dirichlet $L$-function associated with the Dirichlet character $\chi$. We note that in spite of the vast literature on the one-dimensional distributions of such families, the two dimensional distributions for discrete families of $L$-functions are not widely studied. An example of a two-dimensional distribution theorem for the family $L(s, \chi_d)$ is proved by Stankus \cite{S}.

More recently Ihara and Matsumoto initiated a systematic study, in spirit of Jessen-Wintner theory, of two-dimensional value-distributions of logarithms and logarithmic derivatives of families of $L$-functions. The following is proved in \cite[Theorem 1.1 and Proposition 3.5]{IM10}.
\begin{thrm}[Ihara-Matsumoto]
\label{IM-first}
Let $\hat{H}_f$ be the set of all primitive characters modulo $f$ and $\pi(m)$ be the number of primes not exceeding $m$. Let $\sigma:=\Re(s) >1/2$ be fixed and let $|dw|={(dx dy)}/{2\pi}$.  Then there exists a continuous non-negative density function $M_\sigma(w)$ such that 
\begin{equation*}
\lim_{m\rightarrow \infty}\frac{1}{\pi(m)}\sum_{\substack{{2<f\leq m}\\{{f}~\text{prime}}}}
\frac{1}{f-2}\#\{\chi_{f} \in {\hat{H}_{{f}}};~
\log L(s, \chi_{f})\in S\}=\int_{S}M_{\sigma}(w)\;|dw|, 
\end{equation*}
where $S\subset \CC$ is either compact or complement of a compact set. Moreover, $M_\sigma(\bar{w})=M_\sigma(w)$ and $M_\sigma (w)$ tends to zero as $|w|\rightarrow \infty$.
\end{thrm}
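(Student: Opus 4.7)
The plan is to follow the Jessen--Wintner probabilistic framework adapted to complex-valued $L$-functions, interpreting $\log L(s,\chi_f)$ as a two-dimensional real random vector (real and imaginary parts). First I would construct a random model. For each prime $p$, let $X_p$ be independent random variables uniformly distributed on the unit circle $\{z\in\CC:|z|=1\}$, and set
\[
\log L(s,X)=-\sum_p\log\!\left(1-\frac{X_p}{p^s}\right),
\]
which converges almost surely for $\Re(s)>1/2$ by a classical Kolmogorov-type argument. The joint characteristic function of $(\Re\log L(s,X),\Im\log L(s,X))$ factors as
\[
\Phi_s(z_1,z_2)=\mathbb{E}\!\left[\exp\!\left(iz_1\Re\log L(s,X)+iz_2\Im\log L(s,X)\right)\right]=\prod_p\Phi_{p,s}(z_1,z_2),
\]
where each $\Phi_{p,s}$ is an explicit integral of an exponential against Haar measure on the circle. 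The candidate density is then
\[
M_\sigma(w)=\frac{1}{(2\pi)^2}\iint_{\RR^2}\Phi_s(z_1,z_2)\,e^{-i(z_1u+z_2v)}\,dz_1\,dz_2,\qquad w=u+iv.
\]

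The second step is to match characteristic functions. Setting
\[
\Psi_m(z_1,z_2):=\frac{1}{\pi(m)}\sum_{\substack{2<f\le m\\f\text{ prime}}}\frac{1}{f-2}\sum_{\chi_f\in\hat H_f}\exp\!\left(iz_1\Re\log L(s,\chi_f)+iz_2\Im\log L(s,\chi_f)\right),
\]
I would show $\Psi_m(z_1,z_2)\to\Phi_s(z_1,z_2)$ uniformly on compacta. To circumvent the conditional convergence of the Euler product, introduce a truncation parameter $Y$ and work first with $\log L_Y(s,\chi_f)=-\sum_{p\le Y}\log(1-\chi_f(p)/p^s)$; for the truncated object, expand the exponential as a power series in the circle variables and apply orthogonality of Dirichlet characters modulo prime $f$ together with Weyl-type equidistribution of $\chi_f(p)$ on roots of unity as $f$ grows. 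The tail $\log L(s,\chi_f)-\log L_Y(s,\chi_f)$ is controlled in mean square using a zero-density estimate for the family together with an approximate formula for $\log L$ via explicit formulas, then sent to zero after $m\to\infty$.

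The main obstacle, and in fact the decisive technical step, is establishing rapid decay of $\Phi_s$. For each prime $p$ one has the trivial bound $|\Phi_{p,s}|\le 1$ together with a nontrivial Bessel/stationary-phase bound of shape $|\Phi_{p,s}(z_1,z_2)|\ll p^\sigma\,|(z_1,z_2)|^{-1/2}$ in the oscillatory regime. Multiplying these bounds over primes up to an optimally chosen threshold depending on $|(z_1,z_2)|$ should yield an exponential bound
\[
|\Phi_s(z_1,z_2)|\ll_\sigma\exp\!\left(-c_\sigma(|z_1|^2+|z_2|^2)^{\alpha(\sigma)}\right)
\]
for some positive exponent $\alpha(\sigma)$. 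Calibrating the threshold and the transition between the two regimes as $\sigma\to 1/2^+$ is the delicate part; this is exactly the exponential-decay estimate flagged in the abstract as the crucial ingredient.

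With such decay in hand, Fourier inversion defines $M_\sigma$ as a continuous non-negative function. The L\'evy continuity theorem, applied in the two-dimensional setting, upgrades pointwise convergence of the characteristic functions to weak convergence of the underlying measures, and the exponential decay supplies uniform integrability of the tails, which lets one replace convergence on continuity sets by the convergence statement for $S$ either compact or cocompact. The symmetry $M_\sigma(\bar w)=M_\sigma(w)$ follows from the invariance of each $X_p$ under $z\mapsto\bar z$, which forces $\Phi_s(z_1,-z_2)=\Phi_s(z_1,z_2)$, and the vanishing of $M_\sigma(w)$ as $|w|\to\infty$ is a direct consequence of $\Phi_s\in L^1(\RR^2)$ via the Riemann--Lebesgue lemma.
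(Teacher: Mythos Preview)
This statement is not proved in the present paper. Theorem~\ref{IM-first} is quoted from Ihara--Matsumoto \cite{IM10} (specifically their Theorem~1.1 and Proposition~3.5) as background; the paper gives no argument for it. Consequently there is no ``paper's own proof'' to compare your proposal against.

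That said, a brief remark on what the paper does say about the method behind Theorem~\ref{IM-first}. Immediately after stating it, the authors note that in \cite[Section~3]{IM10} the density $M_\sigma(w)$ is built as an infinite convolution of explicitly given local density functions, and that its Fourier transform is shown to lie in $L^t$ for all $1\le t\le\infty$. Your outline instead packages the same information as an Euler product of local characteristic functions coming from a random model with $X_p$ Haar-uniform on the circle, proves the empirical characteristic function converges to this product, and then inverts. These two viewpoints are dual (the Fourier transform of the infinite convolution is precisely your $\Phi_s$), so the core content is the same; the difference is organizational.

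Two small cautions about your sketch. First, the exponential-decay estimate ``flagged in the abstract'' refers to the authors' own Proposition~\ref{mainprop2} for the cubic family, not to the Ihara--Matsumoto setting; in \cite{IM10} the relevant integrability of $\widetilde M_\sigma$ is obtained by different means. Second, your justification of $M_\sigma(\bar w)=M_\sigma(w)$ via $X_p\mapsto\bar X_p$ is only immediately valid when $s$ is real; for general $s$ one should also use that $X_p\mapsto X_p\,\norm(p)^{-it}$ preserves Haar measure (so the law of $\log L(s,X)$ depends only on $\sigma$), after which the conjugation symmetry goes through.
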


In \cite[Section 3]{IM10} the density function $M_\sigma(w)$ is explicitly constructed as an infinite convolution of the local density functions given in \cite[formula (24)]{IM10}. In addition it is proved that the Fourier transform of $M_\sigma(w)$ is continuous in $\sigma$ and $w$, and for each $\sigma>1/2$ it belongs to 
$L^t$ for any $1\leq t \leq +\infty$ (see \cite[Proposition 3.4]{IM10}). 

Jessen-Wintner approach can also be applied to the study of the two-dimensional value distribution of families of Hecke $L$-functions on number fields or on function fields. For example,
let $k$ be $\QQ$ or an imaginary quadratic number field. For an integral ideal $\mathfrak{f}$, denote by $H_\mathfrak{f}$ the ray class group of $k$ of conductor $\mathfrak{f}$. Let $I_\mathfrak{f}$ be the group of fractional ideals in $k$ relatively prime to $\mathfrak{f}$, and let $i_\mathfrak{f}: I_{\mathfrak{f}} \rightarrow H_\mathfrak{f}$ be the projection map. For $\chi_\mathfrak{f} \in {\hat{H}}_\mathfrak{f}$ (the collection of primitive characters of $H_\mathfrak{f}$) and an integral ideal $\mathfrak{a}$, set $\chi_{\mathfrak{f}}(\mathfrak{a})=\chi_{\mathfrak{f}}(i_{\mathfrak{f}}(\mathfrak{a}))$ if $(\mathfrak{a}, \mathfrak{f})=1$, and $\chi_{\mathfrak{f}}(\mathfrak{a})=0$ otherwise. Let $L(s, \chi_\mathfrak{f})$ be the Hecke $L$-function associated to $\chi_\mathfrak{f}$. Let $\mathcal{L}(s, \chi_\mathfrak{f})$ be either $\log{L(s, \chi_\mathfrak{f})}$ or ${L^\prime}/{L}(s, \chi_\frak{f})$.
The following is proved in \cite[Theorem 4]{I-M}.

\begin{thrm}[Ihara-Matsumoto]
\label{IM-theorem}
Let $\sigma:=\Re(s) >1/2$ be fixed and let $|dw|={(dx dy)}/{2\pi}$. Assume the generalized Riemann hypothesis (GRH) for
$L(s, \chi_\mathfrak{f})$. Let ${\rm N}(\mathfrak{f})$ denote the norm of the ideal $\mathfrak{f}$. Then there exists a density function $M_\sigma(w)$, satisfying properties described in Theorem \ref{IM-first},  such that 
\begin{equation*}
\lim_{\substack{{{\rm N}(\mathfrak{f})\to\infty}\\{\mathfrak{f}~\text{prime}}}}
\frac{1}{{\rm N}(\mathfrak{f})}\#\{\chi_\mathfrak{f} \in {\hat{H}_{\mathfrak{f}}};~
\mathcal{L} (s, \chi_\mathfrak{f})\in S\}=\int_{S}M_{\sigma}(w)\;|dw|, 
\end{equation*}
where $S\subset \CC$ is either compact or complement of a compact set.
\end{thrm}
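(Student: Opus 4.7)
The plan is to follow the Jessen--Wintner--Ihara--Matsumoto strategy: construct the candidate density $M_\sigma(w)$ as an infinite convolution of local densities, show the two-dimensional characteristic functions of the empirical measures on the left converge to $\widetilde{M}_\sigma(z,\bar z)$, and invoke L\'evy's continuity theorem together with known tail bounds on $\widetilde{M}_\sigma$ to conclude weak convergence. The density $M_\sigma$ itself is built as in \cite{I-M}: for each prime ideal $\mathfrak{p}$, one forms the local density of $-\log(1-X_\mathfrak{p}\,\mathrm{N}(\mathfrak{p})^{-s})$ (resp.\ its logarithmic derivative) for $X_\mathfrak{p}$ uniform on the unit circle, and convolves these over all $\mathfrak{p}$. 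The product form of $\widetilde{M}_\sigma$ converges absolutely for $\sigma>1/2$ by the standard second-moment estimate on each local factor.

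For the arithmetic side, I would write the characteristic function
\[
\Phi_\mathfrak{f}(z,\bar z)=\frac{1}{|\hat{H}_\mathfrak{f}|}\sum_{\chi_\mathfrak{f}\in\hat H_\mathfrak{f}}\exp\!\bigl(i\,\Re(\bar z\,\mathcal{L}(s,\chi_\mathfrak{f}))\bigr),
\]
and show $\Phi_\mathfrak{f}(z,\bar z)\to\widetilde{M}_\sigma(z,\bar z)$ as $\mathrm{N}(\mathfrak{f})\to\infty$ through primes, uniformly on compact sets in $(z,\bar z)$. Under GRH, the Selberg-type approximation of $\mathcal{L}(s,\chi_\mathfrak{f})$ by a short Dirichlet polynomial supported on prime powers of norm at most $X$ applies, with an error that is $o(1)$ on average over $\chi_\mathfrak{f}$, provided $X$ grows slowly with $\mathrm{N}(\mathfrak{f})$. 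Expanding $\exp(\cdots)$ of the truncated sum as a power series then reduces the computation to evaluating averages of products $\prod_j\chi_\mathfrak{f}(\mathfrak{p}_j)^{a_j}$. Orthogonality of characters of the ray class group $H_\mathfrak{f}$ gives a main term supported on tuples whose ideal product is principal (trivial in $H_\mathfrak{f}$), and when $\mathfrak{f}$ is prime the only such tuples within the truncation length correspond to $\prod\mathfrak{p}_j^{a_j}=(1)$, i.e.\ each $\mathfrak{p}_j$ appears with balanced exponents; this reproduces exactly the local-factor expansion of $\widetilde{M}_\sigma$ up to controllable error.

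The main obstacle, as in \cite{IM10,I-M}, is controlling the error terms in two places simultaneously: (i) the GRH-based truncation error in $\mathcal{L}(s,\chi_\mathfrak{f})$ must be small in the second moment uniformly over $\chi_\mathfrak{f}\in\hat H_\mathfrak{f}$, which forces a careful choice of $X=X(\mathrm{N}(\mathfrak{f}))$; and (ii) after expanding the exponential and applying ray-class orthogonality, one must show that the off-diagonal contribution (from ``accidental'' principal products $\prod\mathfrak{p}_j^{a_j}$ with $\mathrm{N}(\prod\mathfrak{p}_j^{a_j})\leq X^c$ that lie in the principal ray class only because $\mathrm{N}(\mathfrak{f})$ is large) is negligible. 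The restriction to prime conductors, together with the crude bound that any nontrivial principal ideal in the kernel of $i_\mathfrak{f}$ has norm $\gg\mathrm{N}(\mathfrak{f})$ up to finitely many generators of the unit group, is what kills these off-diagonal terms.

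Once $\Phi_\mathfrak{f}\to\widetilde{M}_\sigma$ pointwise and the limiting characteristic function is integrable (this is recorded in the properties inherited from Theorem~\ref{IM-first}), Fourier inversion produces the continuous density $M_\sigma(w)$, and L\'evy's theorem gives weak convergence of the probability measures, proving the statement for compact $S$. To upgrade to complements of compact sets, I would use the exponential decay of $\widetilde{M}_\sigma$ (stated in Theorem~\ref{IM-first}) to ensure $M_\sigma\in L^1(\CC)$ together with a uniform tail estimate on $\mathcal{L}(s,\chi_\mathfrak{f})$, again obtained from the GRH-based approximation, so that mass escaping to infinity on the arithmetic side matches the integral of $M_\sigma$ over the complement.
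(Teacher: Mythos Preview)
This theorem is not proved in the present paper. It is quoted from Ihara--Matsumoto \cite[Theorem~4]{I-M} as background and motivation for the paper's own Theorem~\ref{mainthrm}; no argument for Theorem~\ref{IM-theorem} is given here, so there is no proof in the paper against which to compare your proposal.

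That said, your outline is broadly consistent with the description the authors give of Ihara--Matsumoto's method. In the paragraph following Theorem~\ref{mainthrm} they note that in \cite{I-M} the density $M_\sigma$ is built via infinite convolutions of explicitly given local density functions, and the characteristic function is obtained from that construction; this is exactly the route you sketch (local densities over primes, convolution, L\'evy continuity). The paper contrasts this with its own approach for Theorem~\ref{mainthrm}, where the characteristic function is first realized as a limit of complex moments and the density is then produced from an exponential decay estimate (Lemma~\ref{mainlemma2} and Proposition~\ref{mainprop2}). So your proposal matches the Ihara--Matsumoto strategy as summarized here, but the technical ingredients you list (the GRH-based Dirichlet-polynomial approximation, orthogonality over $H_\mathfrak{f}$, and the decay of $\widetilde M_\sigma$) are supplied in \cite{IM10,I-M}, not in this paper.
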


In this paper we establish an unconditional theorem in spirit of the above result for a family of cubic twists. 
Let $k=\QQ(\sqrt{-3})$ and $\zeta_3=\exp\left(2\pi i/3\right)$. Then  $\ringO_k=\ZZ[\zeta_3]$ is the ring of integers of $k$. Let \[\mathcal{C}:=\left\{c\in\mathfrak{O}_k;~c\neq 1 \text{ is square free and } c\equiv 1 \imod{\langle9\rangle} \right\}.\] 
For $c\in \mathcal{C}$, let $\chi_c =\left(\frac{.}{c}\right)_3$ be the cubic residue character modulo $c$.
We set 
\[ \mathcal{L}(s,\chi_{c})=\begin{cases} 
      \log L(s,\chi_{c}) & \text{(Case 1),} \\
       {L^\prime}/{L}(s,\chi_{c}) & \text{(Case 2).} 
   \end{cases}
\]

In \cite[Theorem 1.4]{AH} we proved a one-dimensional distribution result for $\Re(\mathcal{L}(\sigma, \chi_c)$ for a fixed $\sigma>\frac{1}{2}$ as $c$ varies in $\mathcal{C}$. The goal of this paper is to determine the two-dimensional limiting distribution of the values $\mathcal{L}(s,\chi_c)=\Re\left(\mathcal{L}(s,\chi_{c})\right)+i \Im\left(\mathcal{L}(s,\chi_{c})\right)$ for a fixed $s$ with $\Re(s)>\frac12$ as $c$ varies in $\mathcal{C}$.  More precisely, we prove the following theorem. 
\begin{thrm}\label{mainthrm}

Let $s\in\mathbb{C}$ be such that $\Re(s)>\frac12$.  Let $\mathcal{N}(Y)$ be the number of elements $c\in \mathcal{C}$ with norm not exceeding $Y$.
Then the following statements hold:

\noindent (i)  There exists a smooth probability density function $M_{s}(t_1, t_2)$ such that
\begin{align*}&\lim_{Y\to\infty}\frac{1}{\mathcal{N}(Y)}\#\left\{{c}\in \mathcal{C}: \norm(c)\leq Y,\;\;  \Re\left(\mathcal{L}(s,\chi_{c})\right) \leq z_1,~  \text{and}\;\;\ \Im\left(\mathcal{L}(s,\chi_{c})\right) \leq z_2 \right\}\\&\hspace{1em}=\int_{-\infty}^{z_1} \int_{-\infty}^{z_2} M_{s}(t_1, t_2)\; dt_1 dt_2 .\end{align*}
 The function $M_{s}(t_1, t_2)$ and its partial derivatives tend to zero as $|t_1+it_2|\to\infty$. If $\frac12<\Re(s)<1$, then $M_{s}(t_1,t_2)$ is real analytic and moreover, in (Case 2), $M_s(t_1, t_2)$ is  real analytic for  $\Re(s)=1$ as well. In addition, it satisfies $M_{s}(t_1,-t_2)=M_{\bar{s}}(t_1,t_2)$.

\noindent (ii) The asymptotic distribution function $F_s(z)$
can be constructed as an infinite convolution over prime ideals $\fp$ of $k$,
\[F_{s}(z)=\mbox{*}_{\fp}\;F_{s,\fp}(z),\]
where
\[F_{s,\fp}(z)=\begin{cases}\frac{1}{\norm(\fp)+1}\delta(z)+\frac{1}{3}\left(\frac{\norm(\fp)}{\norm(\fp)+1}\right)\sum_{j=0}^{2}\delta_{-a_{\fp,j}}(z) & \text{if}\; \fp\nmid\langle3\rangle,\\ \delta_{-a_{\fp,0}}(z) & \text{if}\; \fp\mid \langle3\rangle.
\end{cases}\] 
Here $\delta_{a}(z):=\delta(z-a)$, $\delta$ is the Dirac distribution, and
\[a_{\fp,j}:=a_{\fp,j}(s) =\begin{cases} 
    \log(1-\zeta_{3}^{j}\norm(\fp)^{-s})   & \text{ in (Case 1),} \\
\frac{\zeta_{3}^{j}  \log\norm(\fp) }{\norm(\fp)^{s}-\zeta_{3}^{j}}  & \text{in (Case 2).} 
   \end{cases}
   \]
\noindent (iii) The density function $M_s(t_1, t_2)$ can be constructed as the inverse Fourier transform of the characteristic function 
$\varphi_{F_s}(y_1, y_2)$, which in (Case 1) is given (for $y=y_1+iy_2$) by
 \begin{align*}
\varphi_{F_s}(y_1, y_2)&= \exp\left(-i\Re\left(\bar{y}\log\left(1-3^{-s}\right)\right)\right)\\&\hspace{1em}\times\prod_{\fp\nmid\langle3\rangle}
\left( \frac{1}{\norm(\fp)+1}+\frac{1}{3}\frac{\norm(\fp)}{\norm(\fp)+1}\sum_{j=0}^{2}\exp\left(-i\Re\left(\bar{y}\log\left(1-\frac{\zeta_{3}^{j}}{\norm(\fp)^{s}}\right)\right)\right) \right)
\end{align*}
and in (Case 2) is given by 
\begin{align*}
\varphi_{F_s}(y_1, y_2)&= \exp\left(-i\Re\left(\frac{\bar{y}\log 3}{3^{s}-1}\right)\right)\\&\hspace{1em}\times\prod_{\fp\nmid\langle3\rangle} \left( \frac{1}{\norm(\fp)+1}+\frac{1}{3}\frac{\norm(\fp)}{\norm(\fp)+1}\sum_{j=0}^{2}\exp\left(-i\Re\left(\frac{\bar{y}\zeta_{3}^{j}   \log\norm(\fp)    }{\norm(\fp)^{s}-\zeta_{3}^{j}}\right)\right) \right).
\end{align*}

\end{thrm}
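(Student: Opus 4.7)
My plan is to follow the Jessen--Wintner framework as adapted by Ihara--Matsumoto to families of $L$-functions, combined with a quantitative analysis of the associated characteristic function that is tailored to the three-valued nature of cubic characters. The proof divides naturally into four steps: (a) a truncation argument that approximates $\mathcal{L}(s, \chi_c)$ by a partial Euler product; (b) an orthogonality computation giving the limiting joint distribution of the local factors; (c) the key exponential decay estimate for $\varphi_{F_s}$; and (d) Fourier inversion together with a Lévy continuity argument to assemble (i)--(iii).

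For step (a), I would write $\mathcal{L}(s, \chi_c) = \sum_\fp \mathcal{L}_\fp(s, \chi_c)$ with an obvious local term in each of the two cases, and truncate to $\norm(\fp) \le X$. The key input is a mean-square estimate
\begin{equation*}
\sum_{c\in\mathcal{C},\,\norm(c)\le Y} \bigl|\mathcal{L}(s,\chi_c) - \mathcal{L}_X(s, \chi_c)\bigr|^2 \ll \mathcal{N}(Y)\, E(X) \quad \text{with } E(X) \to 0,
\end{equation*}
which should follow unconditionally from large sieve estimates for cubic characters, as developed in \cite{AH}. For step (b), orthogonality gives that as $c$ ranges over $\mathcal{C}$, the symbol $\chi_c(\fp)$ for $\fp\nmid\langle3\rangle$ takes the value $0$ with probability $1/(\norm(\fp)+1)$ (the proportion of $c$ divisible by $\fp$) and each of the values $\zeta_3^j$ with probability $\norm(\fp)/\bigl(3(\norm(\fp)+1)\bigr)$, and that these local random variables are asymptotically independent as $c$ varies. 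This yields the local distributions $F_{s,\fp}$ of part (ii) and, by factorization, the characteristic function $\varphi_{F_s}$ of part (iii). The deterministic contribution at $\fp\mid\langle3\rangle$ comes from the constraint $c\equiv 1\imod{\langle9\rangle}$, which via cubic reciprocity forces $\chi_c(\fp)=1$ at those primes and produces the leading exponential prefactor in $\varphi_{F_s}$.

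The main obstacle is step (c): establishing a bound of the form
\begin{equation*}
\bigl|\varphi_{F_s}(y_1,y_2)\bigr| \ll \exp\bigl(-\kappa(\sigma)(|y_1|+|y_2|)^{\alpha(\sigma)}\bigr), \quad y_1, y_2 \in \RR,
\end{equation*}
for some $\alpha(\sigma)>0$, $\kappa(\sigma)>0$. I would take logarithms of the infinite product, Taylor-expand each local factor, and extract the quadratic contribution from the three cubic phases $\zeta_3^j$. The delicate point is that the three vectors $\log(1-\zeta_3^j\norm(\fp)^{-s})$ (Case 1), or their Case 2 analogues, have nontrivial distribution in the complex plane, so a two-dimensional cancellation argument is required: for every direction $(y_1, y_2)$ one must exhibit a positive-density set of prime ideals of $k$ where the average over $j=0,1,2$ of the local factor contributes a definite negative real part. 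Applying a prime ideal theorem for $k$ to primes $\fp$ with $\norm(\fp)$ in a window around $(|y_1|^2+|y_2|^2)^{1/(2\sigma)}$ then converts this pointwise cancellation into the desired decay. The rate $\alpha(\sigma)$ degrades as $\sigma \to 1/2^+$, and in Case 1 the estimate just fails on $\Re(s)=1$, which explains why real analyticity is claimed only on $\tfrac12<\Re(s)<1$ there, whereas the stronger decay of the logarithmic-derivative phases permits $\Re(s)=1$ in Case 2.

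Given (c), step (d) is routine. Fourier inversion
\begin{equation*}
M_s(t_1, t_2)=\frac{1}{(2\pi)^2}\int_{\RR^2}\varphi_{F_s}(y_1, y_2)\, e^{-i(t_1 y_1+t_2 y_2)}\, dy_1\, dy_2
\end{equation*}
defines a $C^\infty$ probability density with all partial derivatives vanishing at infinity (by differentiating under the integral and invoking the exponential decay); real analyticity in $(t_1, t_2)$ in the asserted ranges of $\sigma$ follows by contour-shifting each $y_j$ into $\CC$, which is justified by the same decay. The symmetry $M_s(t_1,-t_2)=M_{\bar s}(t_1, t_2)$ is immediate from the explicit product formula for $\varphi_{F_s}$. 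Finally, combining (a), (b), and (c) with Lévy's continuity theorem yields the convergence of distribution functions claimed in part (i), completing the proof.
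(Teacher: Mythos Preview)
Your outline is sound and would lead to a valid proof, but it diverges from the paper's argument in two substantive ways.

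For step (a), the paper does \emph{not} truncate $\mathcal{L}(s,\chi_c)$ and control the tail in mean square. Instead it works directly with the characteristic function $I_y(s,\chi_c)=\exp\bigl(i(y_1\Re\mathcal{L}+y_2\Im\mathcal{L})\bigr)$, introduces an exponential smoothing $e^{-\norm(c)/Y}$, and for each $c$ in a ``good'' set $\mathcal{Z}^{\mathrm c}$ (those $c$ for which $L(s,\chi_c)$ has no zero in a rectangle) expresses $I_y(s,\chi_c)$ as a smoothed Dirichlet series plus a contour integral. Zero-density estimates bound the contribution of the complementary set $\mathcal{Z}$, the contour integral is shown to be negligible, and the cube contribution to the Dirichlet series produces $\widetilde M_s$ directly; a Tauberian theorem then removes the smoothing. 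Your mean-square route is legitimate, but note that for $\tfrac12<\sigma\le1$ the remainder $\mathcal{L}-\mathcal{L}_X$ is not itself a Dirichlet polynomial, so bounding its second moment by large sieve alone is not immediate; in practice one still appeals to zero-density to control it for all but a sparse set of $c$, which brings you close to the paper's argument anyway.

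For step (c), the paper's proof of the decay of $\varphi_{F_s}$ is more elementary than the variance/Taylor approach you sketch: rather than extracting a quadratic term, it isolates a range $a_{\sigma,\varepsilon}(y)\le\norm(\fp)^\sigma\le b_{\sigma,\varepsilon}(y)$ and proves by explicit trigonometric computation (Lemma~4.1) that the approximating factor $G_\fp(y,s)$ satisfies $0.097<|G_\fp(y,s)|<0.9978$ there, uniformly in the angular variable $\theta_y$. The prime ideal theorem for $k$ then gives the claimed decay $\exp\bigl(-C|y|^{1/\sigma}(\log|y|)^{1/\sigma-1}\bigr)$ in Case~2 (and the analogous bound in Case~1). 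Your quadratic-variance method would also work and is perhaps more systematic; the paper's explicit approach has the advantage of yielding the precise exponent $1/\sigma$ with minimal machinery, which is exactly what is needed to read off the real-analyticity thresholds you correctly identified.
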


Although the proof of Theorem \ref{mainthrm} shares common features with the proof of Theorem \ref{IM-theorem}, it differs from it in several aspects. Firstly, we employ the zero-density estimates for $L$-functions to prove our theorem unconditionally, without the assumption of the GRH. Secondly, the proof of the existence of the distributions in Theorem \ref{IM-theorem} relies on the construction of the characteristic functions of the distributions via the infinite convolutions of certain explicitly given local density functions (see \cite[Section 4, Theorem $\tilde{M}$ (iv) and references there]{I-M}). In contrast,
we establish the existence of the distribution functions in Theorem \ref{mainthrm} by describing the characteristic functions of the distributions as some complex moments of the related $L$-functions and then construct the associated density functions by an appeal to the exponential decay of the characteristic functions (see Lemma \ref{mainlemma2}).

Here we describe the strategy of our proof of Theorem \ref{mainthrm}. 
We say that $f$ possesses an asymptotic distribution function $F$ if
$$\lim_{N\rightarrow \infty}\frac{\#\{n\leq N;~f_1(n) \leq z_1~\text{and} ~ f_2(n)\leq z_2\}}{N}= F(z_1, z_2),$$
for all $(z_1, z_2)$ in which $F((-\infty, z_1) \times (-\infty, z_2))=F((-\infty, z_1] \times (-\infty, z_2])$ 
(see Section \ref{sectiontwo} for more explanation). 
The following lemma outlines our approach in proving Theorem \ref{mainthrm}.

\begin{lemma}
\label{mainlemma2}
Let $f=f_1+if_2$ be a complex valued arithmetic function and let $y=y_1+iy_2$. Suppose that, as $N\rightarrow \infty$, the functions
$$\frac{\displaystyle{\sum_{n=1}^{\infty}} e^{i(y_1 f_1(n)+y_2 f_2(n))}e^{-n/N}}{\displaystyle{\sum_{n=1}^{\infty}}e^{-n/N}} $$
converge uniformly on any sphere $|y|\leq a$ to a function $\widetilde{M}(y_1, y_2)$. Then $f$ possesses an asymptotic distribution function $F$. In this case, $\widetilde{M}$ is the characteristic function  of $F$. Moreover, if 
\begin{equation}
\label{decay}
\left|\widetilde{M}(y_1, y_2)\right|\leq \exp\left(-\eta\left|y\right|^{\gamma}\right),
\end{equation}
for some $\eta,\gamma>0$,
then $F(z_1, z_2)=\int_{-\infty}^{z_1}  \int_{-\infty}^{z_2}M(t_1, t_2)\;dt_1 dt_2$ for a smooth function $M$, where
\begin{equation}
\label{inversion}
M(z_1, z_2)=(1/2\pi)^2\int_{\mathbb{R}^2}\exp\left(-i(z_1y_1+z_2y_2)\right)\widetilde{M}(y_1, y_2)\;dy_1 dy_2.
\end{equation}
In addition if \eqref{decay} holds for $\gamma \geq 1$, then $M$ is real analytic.
\end{lemma}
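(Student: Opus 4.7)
The plan is to recognize the averaged exponential sum as the characteristic function of an auxiliary probability measure on $\mathbb{R}^2$, obtain a weak limit by the L\'evy continuity theorem, upgrade the resulting Abel-type density of $f$-values to the sharp cut-off density demanded by the definition of $F$ via a Tauberian argument, and finally read off smoothness and analyticity from Fourier inversion. Specifically, with $Z_N=\sum_{m\geq 1}e^{-m/N}$, let $\mu_N$ denote the Borel probability measure on $\mathbb{R}^2$ with mass $e^{-n/N}/Z_N$ at the point $(f_1(n),f_2(n))$; the given ratio is exactly its characteristic function. Uniform convergence on compacta to $\widetilde{M}$ forces continuity of $\widetilde{M}$ with $\widetilde{M}(0,0)=1$, so the multidimensional L\'evy continuity theorem produces a Borel probability measure $\mu$ on $\mathbb{R}^2$, of characteristic function $\widetilde{M}$, such that $\mu_N\to\mu$ weakly. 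Since $Z_N\sim N$ and, for each Borel set $B$, the indicator sequence $\mathbf{1}_B\circ f$ is bounded, the Hardy--Littlewood--Karamata Tauberian theorem converts this Abel convergence at a continuity set $B$ of $\mu$ into $N^{-1}\#\{n\leq N:f(n)\in B\}\to\mu(B)$. Applied at the pairs $(z_1,z_2)$ satisfying the continuity condition $F((-\infty,z_1)\times(-\infty,z_2))=F((-\infty,z_1]\times(-\infty,z_2])$ built into the paper's definition, this produces the asymptotic distribution function $F$ with characteristic function $\widetilde{M}$.

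Assume now the exponential decay \eqref{decay}. Then $\widetilde{M}\in L^1(\mathbb{R}^2)$, and more generally $y_1^a y_2^b\,\widetilde{M}(y_1,y_2)\in L^1(\mathbb{R}^2)$ for every $a,b\geq 0$. The Fourier inversion formula \eqref{inversion} thus defines a bounded continuous function $M$, which by uniqueness of the Fourier transform is the density of $\mu$; consequently $F(z_1,z_2)=\int_{-\infty}^{z_1}\int_{-\infty}^{z_2} M(t_1,t_2)\,dt_1\,dt_2$. Differentiation under the integral sign, justified by the integrability of every $y^\alpha\widetilde{M}(y)$, yields all partial derivatives of $M$, so $M\in C^\infty(\mathbb{R}^2)$.

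Finally, when $\gamma\geq 1$, the uniform bound
\[
|\partial^\alpha M(z)|\leq \frac{1}{(2\pi)^2}\int_{\mathbb{R}^2}|y|^{|\alpha|}e^{-\eta|y|^\gamma}\,dy
\]
combined with the polar substitution $u=\eta r^\gamma$ evaluates the right-hand side as $\frac{1}{2\pi\gamma}\,\eta^{-(|\alpha|+2)/\gamma}\,\Gamma\!\left((|\alpha|+2)/\gamma\right)$. For $\gamma\geq 1$ this is bounded by $A\,|\alpha|!/R^{|\alpha|}$ for constants $A,R>0$ depending only on $\eta$ and $\gamma$, with $\gamma=1$ being the extremal case. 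Combined with the multinomial identity $\sum_{|\alpha|=n}\tfrac{|\alpha|!}{\alpha!}h^\alpha=(h_1+h_2)^n$, this Cauchy-type estimate shows that the multivariate Taylor expansion of $M$ about any base point converges absolutely in a neighborhood of uniform radius, establishing real analyticity on $\mathbb{R}^2$. The main obstacle I anticipate is the second half of the first paragraph: L\'evy's theorem supplies only weak convergence of the Abel-weighted measures, while the definition of $F$ requires sharp cut-off Ces\`aro averages, so the Tauberian step is the substantive ingredient of the argument; the Fourier-analytic manipulations and the Cauchy-type estimate are essentially routine once \eqref{decay} is in hand.
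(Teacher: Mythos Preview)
Your proof is correct and follows essentially the same approach as the paper's: the authors simply state that the lemma is a direct consequence of the multidimensional L\'evy continuity theorem (their Theorem~\ref{Levy}, which also packages the smoothness and real-analyticity conclusions from Fourier inversion) together with a Tauberian theorem of Hardy and Littlewood, referring to \cite[Proof of Lemma~1.9]{AH} for details. Your write-up spells out exactly these steps---L\'evy for the weak limit of the Abel-weighted empirical measures, Hardy--Littlewood--Karamata to pass from Abel to Ces\`aro means using boundedness of $\mathbf{1}_B\circ f$, and the Cauchy-type derivative estimate from the decay \eqref{decay} for analyticity---so there is no genuine methodological difference, only a difference in level of detail.
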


We shall establish the following two propositions so as to verify that the conditions of Lemma \ref{mainlemma2} are satisfied by the family $\mathcal{L}(s, \chi_c)=\Re(\mathcal{L}(s,\chi_{c}))+i \Im(\mathcal{L}(s,\chi_{c}))$ described in Theorem  \ref{mainthrm}.

\begin{prop}\label{mainprop1}
Let \[\mathcal{N}^{*}(Y)=\sum_{c\in\mathcal{C}}\exp(-\norm(c)/Y).\] Fix $s$ with $\sigma=\Re(s)>\frac12$ and $y=y_1+iy_2\in\mathbb{C}$. Let $\widetilde{M}_s(y_1, y_2)$ be the function given by one of the product formulas in Theorem \ref{mainthrm}. Then
\begin{equation*}
\lim_{Y\to\infty}\frac{1}{\mathcal{N}^{*}(Y)} \sum_{{{c}}\in \mathcal{C}}^{\star}\exp\left(i(y_1\Re(\mathcal{L}(s,\chi_{c}))+y_2 \Im(\mathcal{L}(s,\chi_{c})))\right)\exp(-\norm({c})/Y)=\widetilde{M_s}(y_1, y_2)
\end{equation*} 
uniformly on any sphere $|y|\leq a$,  where $\star$ indicates that the sum is over $c$ such that $L(s,\chi_{c})\neq 0$. 
\end{prop}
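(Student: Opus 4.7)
Using $y_{1}\Re(z)+y_{2}\Im(z)=\Re(\bar{y}z)$, the summand to be averaged equals $\exp\bigl(i\Re(\bar{y}\,\mathcal{L}(s,\chi_{c}))\bigr)$. In (Case~1), expanding $\log L(s,\chi_{c})=-\sum_{\fp}\log\bigl(1-\chi_{c}(\fp)\norm(\fp)^{-s}\bigr)$ factors the summand formally into an Euler product over primes; (Case~2) is parallel, with $\chi_{c}(\fp)\log\norm(\fp)/(\norm(\fp)^{s}-\chi_{c}(\fp))$ in place of $-\log\bigl(1-\chi_{c}(\fp)\norm(\fp)^{-s}\bigr)$. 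My plan is (i) to replace $\mathcal{L}(s,\chi_{c})$ by a partial-sum truncation $\mathcal{L}_{x}(s,\chi_{c})$ involving only primes of norm $\le x$; (ii) to compute the smoothed average of the truncated quantity via the limiting joint distribution of the finite tuple $(\chi_{c}(\fp))_{\norm(\fp)\le x}$ as $c$ varies in $\mathcal{C}$; and (iii) to let $x\to\infty$. The final $\varepsilon$-argument takes $x$ large enough for (iii), then $Y$ large enough for (i)--(ii).

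\textbf{Step (i): Truncation.} The key unconditional input is an $L^{1}$-type estimate
\[
\limsup_{Y\to\infty}\frac{1}{\mathcal{N}^{*}(Y)}\sum_{c\in\mathcal{C}}^{\star}\bigl|\mathcal{L}(s,\chi_{c})-\mathcal{L}_{x}(s,\chi_{c})\bigr|\,e^{-\norm(c)/Y}=o_{x\to\infty}(1),
\]
uniform on $|y|\le a$. Here I would invoke the zero-density bounds for the family $\{L(s,\chi_{c})\}_{c\in\mathcal{C}}$ established earlier in the paper: outside an exceptional set of $c$ of relative density $o(1)$, $L(s,\chi_{c})$ has no zeros in a small rectangle around $s$, and a standard contour-shift argument yields a pointwise approximation $\mathcal{L}(s,\chi_{c})\approx\mathcal{L}_{x}(s,\chi_{c})$ with error tending to $0$ as $x\to\infty$. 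The modulus-one summand on the exceptional $c$, and on those excluded by the $\star$-condition $L(s,\chi_{c})\ne 0$, is controlled by the smallness of the exceptional set.

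\textbf{Step (ii): Averaging the truncation.} With $\mathcal{L}$ replaced by $\mathcal{L}_{x}$, the summand is a bounded function of the finite tuple $(\chi_{c}(\fp))_{\norm(\fp)\le x}$, each coordinate taking values in $\{0,1,\zeta_{3},\zeta_{3}^{2}\}$. I would verify that, as $Y\to\infty$, this tuple equidistributes under the smoothed measure $\mathcal{N}^{*}(Y)^{-1}\sum_{c}e^{-\norm(c)/Y}\delta_{c}$ toward the product measure in which each $\chi_{c}(\fp)$ (for $\fp\nmid\langle3\rangle$) equals $0$ with probability $1/(\norm(\fp)+1)$ and each cube root of unity with probability $\norm(\fp)/(3(\norm(\fp)+1))$. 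The proof of this equidistribution rests on cubic reciprocity in $k=\QQ(\sqrt{-3})$---valid because every $c\equiv 1\imod{\langle9\rangle}$ is primary---which converts products $\prod_{\fp}\chi_{c}(\fp)^{a_{\fp}}$ into cubic characters $\chi_{\fa}(c)$ of $c$ (with $\fa=\prod_{\fp}\fp^{a_{\fp}}$), combined with orthogonality and non-trivial cubic character-sum estimates over the square-free progression $\mathcal{C}$. Primes above $\langle3\rangle$ contribute the deterministic prefactor involving $a_{\fp,0}$, since $\chi_{c}(\fp)$ is forced by $c\equiv 1\imod{\langle9\rangle}$. The resulting limit is exactly the finite truncation $\widetilde{M}_{s,x}(y_{1},y_{2})$ of the Euler product in the statement.

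\textbf{Step (iii) and main obstacle.} The cancellation $\sum_{j=0}^{2}\zeta_{3}^{j}=0$ ensures that each local factor of $\widetilde{M}_{s}$ at a prime $\fp\nmid\langle3\rangle$ equals $1+O_{a}(\norm(\fp)^{-2\sigma})$ uniformly on $|y|\le a$, so $\widetilde{M}_{s}$ converges absolutely for $\sigma>\tfrac{1}{2}$, with $\widetilde{M}_{s,x}\to\widetilde{M}_{s}$ uniformly on $|y|\le a$ as $x\to\infty$. Combining Steps (i)--(iii) by the $\varepsilon$-argument completes the proof. I expect the main obstacle to be Step (i): establishing an unconditional $L^{1}$-mean approximation of $\mathcal{L}(s,\chi_{c})$ by its partial sum in the critical strip $\tfrac{1}{2}<\sigma\le 1$, uniform in $y$, hinges on the zero-density bounds for the cubic family---these are the substitute for the GRH used in Theorem~\ref{IM-theorem}.
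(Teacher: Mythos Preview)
Your proposal is correct in outline and arrives at the same result, but it takes a genuinely different technical route from the paper's proof.

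The paper does not truncate $\mathcal{L}(s,\chi_{c})$ to a partial prime sum and then appeal to equidistribution of the finite character tuple. Instead it expands $\exp\bigl(\tfrac{i}{2}\bar{y}\mathcal{L}(s,\chi_{c})\bigr)\exp\bigl(\tfrac{i}{2}y\overline{\mathcal{L}(s,\chi_{c})}\bigr)$ as a genuine Dirichlet series $\sum_{\fa,\fb}\lambda_{\bar{y}}(\fa)\lambda_{y}(\fb)\chi_{c}(\fa\fb^{2})\norm(\fa)^{-s}\norm(\fb)^{-\bar{s}}$ (valid for $\Re(s)>1$), and then introduces a Mellin smoothing $e^{-\norm(\fa\fb)/X}$ via $\Gamma(u)X^{u}$. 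A contour shift to the left of $\Re(u)=0$ gives an exact identity: the smoothed Dirichlet series plus a contour integral over a path $L_{Y,\epsilon,A}$ staying inside the zero-free rectangle $R_{Y,\epsilon,A}$. The main term $\widetilde{M}_{s}$ then emerges from the ``cube contribution'' to the sum over $c$ of the smoothed Dirichlet series (i.e., the terms with $\fa\fb^{2}$ a perfect cube), while the contour integral and the exceptional $c\in\mathcal{Z}$ are bounded using the zero-density estimate. The smoothing parameter $X$ is tied to $Y$ throughout; there is no separate limit $x\to\infty$.

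Your approach---sharp prime truncation, equidistribution of $(\chi_{c}(\fp))_{\norm(\fp)\le x}$ via cubic reciprocity and character-sum bounds, then $x\to\infty$ via the $O(\norm(\fp)^{-2\sigma})$ tail---is more probabilistic in flavor and arguably more transparent as to why the product formula arises. The paper's Dirichlet-series/contour-integral machinery (inherited from Ihara--Matsumoto and \cite{AH}) is heavier but packages Step~(i) more cleanly: the approximation error is an explicit integral that is bounded directly on the contour, with $X$ coupled to $Y$. In your framework, the double limit $\limsup_{Y}\cdots=o_{x\to\infty}(1)$ in Step~(i) is the delicate point you correctly flag; to make it work unconditionally one typically ends up linking $x$ to $Y$ anyway (as in Mourtada--Murty), at which point the two arguments converge. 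Your Step~(ii) equidistribution computation is the probabilistic rephrasing of the paper's ``contribution of cubes'' calculation, and your local probabilities $1/(\norm(\fp)+1)$ and $\norm(\fp)/(3(\norm(\fp)+1))$ match exactly the local factors $F_{s,\fp}$ in Theorem~\ref{mainthrm}(ii).
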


\begin{prop}\label{mainprop2}
Fix $s$ with $\sigma=\Re(s)>\frac12$. As $|y|\rightarrow\infty$ , we have \[\left|  \widetilde{M_s}(y_1, y_2)
   \right|\leq\begin{cases}\exp\left(-C_1|y|^{\frac1{\sigma}}\left(\log\left| y\right|\right)^{-1}\right)& \text{(Case 1)}\\ \exp\left(-C_2|y|^{\frac1{\sigma}}\left(\log\left| y\right|\right)^{\frac{1}{\sigma}-1}\right) &\text{(Case 2)},\end{cases}\] where $C_1$ and $C_2$ are positive constants that depend only on $\sigma$.
\end{prop}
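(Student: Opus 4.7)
My plan is to bound $|\widetilde{M_s}(y)|$ factor-by-factor in the explicit Euler product from Theorem \ref{mainthrm}(iii), extracting a quadratic-in-$|y|$ gain from each ``good'' prime by exploiting the cancellation of the cube roots of unity. The scalar prefactor at the prime dividing $\langle3\rangle$ has modulus $1$ and can be discarded; the local factor at $\fp\nmid\langle3\rangle$ takes the form
\[
G_\fp(y)=\frac{1}{\norm(\fp)+1}+\frac{\norm(\fp)}{\norm(\fp)+1}E_\fp(y),\qquad E_\fp(y):=\frac{1}{3}\sum_{j=0}^{2}\exp\!\bigl(-i\Re(\bar y\, a_{\fp,j})\bigr),
\]
and satisfies $|G_\fp(y)|\le 1$ trivially. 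The goal is a quantitative refinement $|G_\fp(y)|\le 1-\delta_\fp(y)$ valid on a suitable subfamily of primes.

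The first step is the Taylor expansion $e^{-i\theta}=1-i\theta-\tfrac{1}{2}\theta^{2}+O(\theta^{3})$ applied with $\theta=\theta_{\fp,j}:=\Re(\bar y\, a_{\fp,j})$. In (Case 1) I would use $a_{\fp,j}=-\zeta_3^{j}/\norm(\fp)^{s}+O(\norm(\fp)^{-2\sigma})$; the algebraic fact that $\sum_{j=0}^{2}\zeta_3^{jn}$ vanishes unless $3\mid n$ collapses the linear sum $\sum_j\theta_{\fp,j}$ to size $O(|y|/\norm(\fp)^{3\sigma})$, while the quadratic sum evaluates cleanly:
\[
\sum_{j=0}^{2}\bigl(\Re(\bar y\,\zeta_3^j/\norm(\fp)^{s})\bigr)^{2}=\tfrac{3}{2}|y|^{2}/\norm(\fp)^{2\sigma}.
\]
Provided $|y|\le c_{0}\norm(\fp)^{\sigma}$ for a sufficiently small absolute constant $c_{0}$, one deduces $|E_\fp(y)|\le 1-|y|^{2}/(8\norm(\fp)^{2\sigma})$, and hence $|G_\fp(y)|\le 1-c_{1}|y|^{2}/\norm(\fp)^{2\sigma}$. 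The identical scheme applied in (Case 2) with $a_{\fp,j}\approx\zeta_3^{j}\log\norm(\fp)/\norm(\fp)^{s}$ produces the stronger bound $|G_\fp(y)|\le 1-c_{2}|y|^{2}(\log\norm(\fp))^{2}/\norm(\fp)^{2\sigma}$ under the tightened constraint $|y|\log\norm(\fp)\le c_{0}\norm(\fp)^{\sigma}$.

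Next I would take logarithms, apply $\log(1-x)\le -x$, and restrict the sum over $\fp$ to a dyadic window $X\le\norm(\fp)\le 2X$, invoking the prime ideal theorem for $k=\QQ(\sqrt{-3})$ to count $\asymp X/\log X$ primes in this range. In (Case 1) the optimal choice $X\asymp|y|^{1/\sigma}$ satisfies the Taylor constraint, contributes $\asymp 1$ per prime to the exponent, and yields a total of order $|y|^{1/\sigma}/\log|y|$. In (Case 2) the constraint forces $X\asymp(|y|\log|y|)^{1/\sigma}$, giving a total of order $|y|^{1/\sigma}(\log|y|)^{1/\sigma-1}$. Both match the stated bounds exactly.

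The main obstacle will be making the passage to modulus genuinely quantitative and uniform in $\fp$: since $E_\fp(y)$ is close to but not on the real axis, the inequality $|1-A-iB+C|\le 1-A/2$ (where $A\asymp|y|^{2}/\norm(\fp)^{2\sigma}$ is the desired gain, $B$ is the residual imaginary contribution, and $C$ collects cubic and higher errors) requires $|B|$ and $|C|$ to be substantially smaller than $A$ \emph{throughout} the dyadic window. Arranging this forces the window to sit above $|y|^{1/\sigma}$ (respectively $(|y|\log|y|)^{1/\sigma}$) by a sufficiently large absolute constant, chosen at the very start of the argument. Once the Taylor bounds are uniform in $\fp$, the remaining steps—product-to-sum, prime ideal theorem, and dyadic optimization in $X$—are routine.
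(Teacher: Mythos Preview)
Your proposal is correct but follows a genuinely different route from the paper.

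The paper (written out for Case 2; Case 1 is declared similar) first replaces each local factor $\widetilde{M}_{s,\fp}$ by a truncation $G_\fp(y,s)$ in which $a_{\fp,j}$ is replaced by its leading term $\zeta_3^{j}\log\norm(\fp)/\norm(\fp)^{s}$, incurring an additive error $O\bigl(|y|\log\norm(\fp)/\norm(\fp)^{2\sigma}\bigr)$. It then restricts to primes with $\norm(\fp)^{\sigma}$ in a window around $|y|\log|y|$ chosen so that the angle $|y|\log\norm(\fp)/\norm(\fp)^{\sigma}$ lies in the explicit interval $\bigl(\pi/(9\sqrt{3}/2),\,\pi/6\bigr)$ --- of order $1$, \emph{not} small --- and proves a hard numerical bound $0.097<|G_\fp|<0.9978$ via the identity
\[
|1+e^{iax}+e^{ibx}|^{2}=3+4\cos\!\Bigl(\tfrac{a+b}{2}x\Bigr)\cos\!\Bigl(\tfrac{a-b}{2}x\Bigr)+2\cos\bigl((a-b)x\bigr)
\]
together with a case split according to whether $\theta_{y}+t\log\norm(\fp)$ lands in one of two explicit unions of arcs $A$ or $B$ covering $\mathbb{R}/2\pi\mathbb{Z}$. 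The lower bound $|G_\fp|>0.097$ is not cosmetic: it is needed to absorb the truncation error multiplicatively when passing from $\widetilde{M}_{s,\fp}$ back to $G_\fp$.

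Your approach bypasses both the truncation step and the trigonometric case analysis. You place the prime window slightly higher (so the angles are small rather than of size $\asymp 1$), Taylor-expand, and let the algebraic identity $\sum_{j}\zeta_3^{j}=0$ (and more generally $\sum_{j}\zeta_3^{jn}=0$ for $3\nmid n$) kill the linear contribution and isolate a clean quadratic gain $|E_\fp|\le 1-c|y|^{2}/\norm(\fp)^{2\sigma}$ (with the extra $(\log\norm(\fp))^{2}$ in Case~2). This is more structural and the bookkeeping is lighter: no case split on $\theta_{y}+t\log\norm(\fp)$, no separate lower bound on $|G_\fp|$, and the approximation of $a_{\fp,j}$ by its leading term is folded into the Taylor remainder rather than handled as a distinct multiplicative error. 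The paper's method, by contrast, works with explicit numerical constants, does not require $c_{0}$ small, and stays closer to the Wintner and Mourtada--Murty arguments the authors cite. Both routes ultimately feed the same prime-ideal count in a window of the same order and produce the same exponents.
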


In establishing value-distribution results 
a rapid decay estimate for the characteristic function of the distribution is almost always a crucial part of the proof regardless of the approach employed (see for example \cite[Section~6]{elliott2}, \cite[Formula (38)]{IM10},  \cite[Lemma~4]{MM},  \cite[Proposition~1.11]{AH}, and \cite[Section~6]{LLR}).  
A common approach in proving results similar to Proposition \ref{mainprop2} uses an interpretation of the characteristic function of the distribution as the characteristic function of an infinite sum of certain independent random variables. Then the desired decay estimate of the characteristic function can be explored by exploiting the information on the statistical parameters, such as mean and variance, of the associated random variables (see for example \cite[Proposition 2.1]{LL}).
In this paper we prove the stated decay estimate in Proposition \ref{mainprop2} by an elementary method in spirit of \cite[Proposition 1.11]{AH}, which itself is based on ideas from \cite{MM} and \cite{W}.
We further note that, in proving \cite[Proposition~1.11]{AH} the authors adapted the method used in \cite{MM} in a relatively straightforward manner to get the desired upper bound for the characteristic function of the 1-dimesional distribution of the family $\Re\left(\mathcal{L}(\sigma,\chi_c)\right)$ for a fixed $\sigma>\frac12$ as $c$ varies in $\mathcal{C}$. In fact, one can easily check that the characteristic function of the distribution given in \cite[Theorem 1.4]{AH} can be expressed as $\varphi_{F_{\sigma}}(2y_1,0)$, where $\varphi_{F_s}(y_1,y_2)$ is given in Theorem \ref{mainthrm} above. While the proof of the exponential decay of $\varphi_{F_s}(y_1,y_2)$ is inspired by similar ideas, the details are more subtle and intricate owing to the nature of the characteristic function $\varphi_{F_s}(y_1,y_2)$ when $t=\Im(s)$ and $y_2$ are non-zero.

In Section \ref{sectiontwo}, after reviewing preliminaries on distribution functions, we prove Lemma \ref{mainlemma2} and show how it together with Propositions \ref{mainprop1} and \ref{mainprop2} imply Theorem \ref{mainthrm}. The proof of Proposition \ref{mainprop1} is given in Section \ref{sec:prop1}, and it follows to a great extent the proof of \cite[Proposition 1.10]{AH}. The proof of Proposition \ref{mainprop2}, however, requires novel ideas and is given in full detail in Section \ref{sec:prop2}.

\section{Distributions and Proofs of Lemma \ref{mainlemma2} and Theorem \ref{mainthrm}}
\label{sectiontwo}

Let $\mathbb{R}^k$ be the $k$-dimensional Euclidean space equipped with the Lebesgue measure. Let $F$ be a probability measure defined on the Borel $\sigma$-algebra $\mathcal{B}$ of $\mathbb{R}^k$. The set function $F: \mathcal{B} \rightarrow [0, 1]$ sending $B$ to $F(B)$ is called a \emph{distribution function} in $\mathbb{R}^k$. A set $E\in \mathbb{B}$ is called a \emph{continuity set} of the distribution function $F$ if $$F ({\rm Int}(E))=F(\bar{E}).$$ 
Here ${\rm Int}(E)$ is the collection of the interior points of $E$ and $\bar{E}$ denotes the closure of $E$. One can show that there is a countable set $C:=C(F)\subset \mathbb{R}$ such that $[a_1, b_1]\times \cdots \times [a_k, b_k]$ is a continuity set of $F$ for all $a_i, b_i \not\in C$. We say that the distribution function $F$ is \emph{absolutely continuous} if there is a Lebesgue integrable function $M(t):=M(t_1, \cdots, t_k)$ defined on $\mathbb{R}^k$ such that $$F(E)=\int_{E} M(t)\; dt$$
for any Borel set $E\in \mathcal{B}$.

For $y=(y_1, \cdots, y_k)$ and $z=(z_1, \cdots, z_k)$, set $\langle y, z \rangle=y_1 z_1+\cdots+y_kz_k$. The \emph{characteristic function} $\varphi_F$ of the distribution function $F$ is defined as the Fourier transform of the measure $F$. More precisely, $$\varphi_F(y):=\int_{\mathbb{R}^k} e^{i\langle y, z \rangle} \;dF(z).$$

We say that a sequence $(F_n )_{n=1}^{\infty}$ of distribution functions \emph{converges weakly} to a distribution function $F$, and we write $F_n \rightarrow F$, if $F_n (E) \rightarrow F(E)$ for all continuity sets $E$ of $F$. We know that $F_n \rightarrow F$ if and only if
$$\int_{\mathbb{R}^k} f(z)\; dF_n(z) \rightarrow \int_{\mathbb{R}^k} f(z)\; dF(z)$$
for any bounded continuous function $f(z)$ on $\mathbb{R}^k$.

We can show that the correspondence between the class of all distribution functions and the class of their characteristic functions is a one to one correspondence (see \cite[p. 53]{JW}). Moreover, the following important theorem exhibits a close connection between the convergence of a sequence of distribution functions and the corresponding sequence of their characteristic functions. This theorem is essentially an elaborate version of Levy's continuity theorem for $k$-dimensional distributions as described by Jessen and Wintner in \cite[pp. 53 and 54]{JW}.

\begin{thrm}
\label{Levy}
(i) Let $(F_n)_{n=1}^{\infty}$ be a sequence of distribution functions, and let $(\varphi_{F_n})_{n=1}^{\infty}$ be the sequence of their characteristic functions. Then $F_n$ converges weakly to a distribution function $F$ if and only if $\varphi_{F_n}$ converges uniformly to $\varphi$ in every sphere $|y|\leq a$.
Furthermore, in this case, $\varphi$ is the characteristic function of $F$, i.e., $\varphi=\varphi_F$.

(ii) In part (i) 
 if $y^{a}\varphi(y)\in L^{1}$ for an integer $a\geq 0$, then $F$ is absolutely continuous and its density $M(t)$, given by the inversion formula
 \begin{equation}
 \label{inversion formula}
M(t)= (1/2\pi)^k \int_{\mathbb{R}^k} e^{-i\langle t, y\rangle} \varphi(y)\; dt,
\end{equation}
is continuous, approaches zero as $|t|\rightarrow \infty$, and in the case $a>0$ has continuous partial derivatives of order $\leq a$, which may be obtained by differentiation under integral sign of  \eqref{inversion formula}, that approach zero as $|t| \rightarrow \infty$.
 
(iii) In part (i) if $$\varphi(y)=O\left(\exp(-A|y|) \right),$$
as $|y| \rightarrow \infty$, for some $A>0$, then $F$ is absolutely continuous and its density $M(t)=M(t_1, \cdots, t_k)$ is real analytic. In other words there is a neighbourhood of  each
point $(t_1^0, \cdots, t_k^0)\in \mathbb{R}^k$  in which $M(t)$ can be developed as a power series in terms of $t_1-t_1^0, \cdots, t_k-t_k^0$.
\end{thrm}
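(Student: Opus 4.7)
The plan is to treat Theorem \ref{Levy} as the $k$-dimensional Levy continuity theorem supplemented by classical Fourier inversion and a Paley--Wiener type argument, following the outline in Jessen--Wintner. For the forward direction of part (i), I would apply the definition of weak convergence directly to the bounded continuous function $z\mapsto e^{i\langle y,z\rangle}$ to obtain pointwise convergence of $\varphi_{F_n}$ to $\varphi_F$; uniformity on a sphere $|y|\le a$ would then follow from tightness of $(F_n)$ (automatic for a weakly convergent sequence) combined with the uniform continuity of the map $(y,z)\mapsto e^{i\langle y,z\rangle}$ on $\{|y|\le a\}\times\{|z|\le R\}$, via a standard $\varepsilon/3$ split of the integral over $|z|\le R$ and $|z|>R$.

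The heart of the argument is the converse in part (i). Here I would first establish tightness of $(F_n)$ directly from the hypothesis by means of the inequality
\[
F_n\bigl(\{z\in\mathbb{R}^k:|z_j|>2/\delta\}\bigr) \;\le\; \frac{C}{\delta}\int_{-\delta}^{\delta}\bigl(1-\Re\,\varphi_{F_n}(u\,\mathbf{e}_j)\bigr)\,du,
\]
valid for each coordinate direction $\mathbf{e}_j$ and every $\delta>0$. Uniform convergence of $\varphi_{F_n}$ to $\varphi$ on compact sets, combined with the fact that $\varphi(0)=1$ and $\varphi$ is continuous at $0$ (as a uniform limit of continuous functions), would make the right-hand side uniformly small for small $\delta$, yielding tightness. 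Prokhorov's theorem then supplies subsequential weak limits, and the forward direction identifies each such limit as a distribution with characteristic function $\varphi$. Uniqueness of the Fourier--Stieltjes transform forces all subsequential limits to coincide, so the entire sequence $F_n$ converges weakly to this common $F$.

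For part (ii), the hypothesis $y^{a}\varphi(y)\in L^{1}$ gives in particular $\varphi\in L^{1}$, so the standard Fourier inversion theorem produces a continuous density $M$ via the displayed inversion formula, and the Riemann--Lebesgue lemma forces $M(t)\to 0$ as $|t|\to\infty$. Differentiation under the integral sign for multi-indices $\alpha$ with $|\alpha|\le a$ would be justified by the integrable majorant $|y^{\alpha}\varphi(y)|$, yielding continuous partial derivatives that also vanish at infinity by Riemann--Lebesgue. For part (iii), the exponential decay $|\varphi(y)|\ll\exp(-A|y|)$ permits replacement of the real $t$ in the inversion integral by complex arguments, producing a function holomorphic in a tubular strip of $\mathbb{C}^k$ about $\mathbb{R}^k$; real analyticity of $M$ on $\mathbb{R}^k$ follows from this holomorphic extension. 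The main technical obstacle is the tightness inequality in part (i): it is the only genuinely non-formal step, requiring a Fubini manipulation of the identity $1-\cos\langle y,z\rangle = \int_0^{\langle y,z\rangle}\sin u\,du$ together with a lower bound that survives in each coordinate direction. Once tightness is secured, the remainder of the argument reduces to Prokhorov's theorem, Fourier inversion, Riemann--Lebesgue, and analytic continuation.
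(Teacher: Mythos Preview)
Your proposal is correct and follows the classical route (tightness via the truncated Fourier inequality, Prokhorov, Fourier inversion with Riemann--Lebesgue, and a Paley--Wiener argument for real analyticity). Note, however, that the paper does not actually supply its own proof of this theorem: it states the result as a known fact, referring to Jessen and Wintner \cite[pp.~53--54]{JW}, so your sketch is precisely the standard argument that the cited reference provides rather than an alternative to anything the authors wrote.
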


We say that $f(n)= f_1(n)+if_2(n)$ possesses an asymptotic distribution function {$F$}, if
$$\lim_{N\rightarrow \infty}\frac{\#\{n\leq N;~f_1(n) \leq z_1~\text and ~ f_2(n)\leq z_2\}}{N}= F(z_1, z_2),$$
for all $(z_1, z_2)$ in which $E_{z_{1}, z_{2}}=(-\infty, z_1] \times (-\infty, z_2]$ is a continuity set of $F$.

Theorem \ref{Levy} is the key tool in the proof of Lemma \ref{mainlemma2} that provides a criteria for proving the asymptotic distribution of the real and imaginary parts of the values of the cubic twists.

\begin{proof}[Proof of Lemma \ref{mainlemma2}]
The proof is a direct consequence of Theorem \ref{Levy} and a Tauberian theorem of Hardy and Littlewood (see \cite[Proof of Lemma 1.9]{AH} for details).
\end{proof}

The \emph{convolution} of two distribution functions $F$ and $G$ is the distribution function $F*G$ defined by
$$(F*G)(z)=\int_{\mathbb{R}^k} F(z-y)\; dG(y)=\int_{\mathbb{R}^k} G(z-y)\; dF(y).$$
One can show that $\varphi_{F*G}=\varphi_F \varphi_G.$  A distribution function $F$ is called the infinite convolution of distribution functions $F_1, F_2, \ldots, F_n, \ldots$ if $F_1*F_2*\ldots *F_n$  converges weakly to $F$ as $n\rightarrow \infty$. In such case we write $F=*_i F_i$. The following theorem provides a necessary and sufficient condition for the existence of infinite convolutions.
\begin{thrm}
\label{infinite-convolution}
The infinite convolution $*_i F_i$ exists if and only if there exists $\delta>0$ such that for $|y|\leq \delta$ we have
$$\lim_{m,n\rightarrow \infty} \prod_{m<j\leq n} \varphi_{F_j}(y)=1.$$
\end{thrm}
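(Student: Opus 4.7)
The plan is to prove both directions using Theorem \ref{Levy}(i), which equates weak convergence of distributions with uniform convergence of their characteristic functions on every sphere $|y|\leq a$. Throughout, set $G_n:=F_1 * F_2 * \cdots * F_n$, so that $\varphi_{G_n}(y)=\prod_{j\leq n}\varphi_{F_j}(y)$.

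For the forward direction, assume the infinite convolution exists and write $F$ for the weak limit of $G_n$. By Theorem \ref{Levy}(i), $\varphi_{G_n}\to\varphi_F$ uniformly on $|y|\leq a$ for every $a>0$. Since $\varphi_F$ is continuous with $\varphi_F(0)=1$, there is a $\delta>0$ on which $|\varphi_F(y)|\geq 1/2$. For $m<n$ with $m$ sufficiently large, on this disc
\[
\prod_{m<j\leq n}\varphi_{F_j}(y)=\frac{\varphi_{G_n}(y)}{\varphi_{G_m}(y)}\longrightarrow\frac{\varphi_F(y)}{\varphi_F(y)}=1
\]
uniformly in $y$, which yields the required limit.

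For the reverse direction, assume $\prod_{m<j\leq n}\varphi_{F_j}(y)\to 1$ on $|y|\leq\delta$. Writing $\varphi_{G_n}(y)-\varphi_{G_m}(y)=\varphi_{G_m}(y)\bigl(\prod_{m<j\leq n}\varphi_{F_j}(y)-1\bigr)$ and using $|\varphi_{G_m}(y)|\leq 1$ shows that $\varphi_{G_n}$ is uniformly Cauchy on $|y|\leq\delta$, so it converges uniformly there to some continuous function $\varphi$ with $\varphi(0)=1$. I would then establish tightness of the family $\{G_n\}$ via the standard Bochner-type inequality, which bounds $G_n(\{|x|>M\})$ by a constant multiple of the integral of $1-\mathrm{Re}\,\varphi_{G_n}(y)$ over a small cube about the origin of size $\sim 1/M$. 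Taking $M$ large so that this cube sits inside $|y|\leq\delta$, the uniform convergence together with continuity of $\varphi$ at the origin makes the integral small uniformly in $n$ (after handling finitely many exceptional $G_n$ individually). By Prohorov's theorem, every subsequence of $\{G_n\}$ has a weakly convergent sub-subsequence whose limit $G$ satisfies $\varphi_G\equiv\varphi$ on $|y|\leq\delta$, by the forward direction of Theorem \ref{Levy}(i).

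To pin down a unique limit, I would invoke the Cram\'er--Wold device. For each fixed $y_0\in\mathbb{R}^k$, the one-dimensional characteristic function $t\mapsto\varphi_{G_n}(ty_0)$ converges on the interval $|t|\leq\delta/|y_0|$ to a function continuous at $0$, so the classical one-dimensional L\'evy continuity theorem yields weak convergence of the linear functional $\langle y_0, X_n\rangle$ with $X_n\sim G_n$; ranging over all $y_0$, Cram\'er--Wold gives weak convergence of $G_n$ itself to a single distribution $F$, which is by construction the infinite convolution. The main obstacle is this passage from local uniform convergence of characteristic functions on $|y|\leq\delta$ to global weak convergence of measures, since characteristic functions are generally not determined by their values near the origin; the key inputs are tightness via the Bochner inequality and the Cram\'er--Wold reduction to one dimension, where local convergence plus continuity at zero is enough.
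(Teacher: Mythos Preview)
The paper does not prove this theorem; it simply cites Jessen--Wintner \cite[Theorem 1 and footnote on p.~53]{JW}. Your forward direction is correct, and your tightness argument via the Bochner inequality is the right idea for the converse. The gap is in your final uniqueness step. You claim that for each fixed $y_0$ the one-dimensional characteristic functions $t\mapsto\varphi_{G_n}(ty_0)$ converge on $|t|\leq\delta/|y_0|$ and that ``the classical one-dimensional L\'evy continuity theorem'' then gives weak convergence of $\langle y_0,X_n\rangle$. It does not: L\'evy's theorem requires pointwise convergence on all of $\mathbb{R}$, and convergence of a characteristic function on a neighborhood of the origin does not in general determine the measure or force weak convergence. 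You flag exactly this obstacle and then invoke Cram\'er--Wold to get around it, but Cram\'er--Wold only transfers the problem to one dimension, where the same obstruction persists.

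The standard repair exploits the convolution structure you have not yet used. Apply your Bochner/tightness argument instead to the \emph{tail} convolutions $R_{m,n}=F_{m+1}*\cdots*F_n$: their characteristic functions tend to $1$ uniformly on $|y|\leq\delta$, so the family is tight and every subsequential weak limit $R$ satisfies $\varphi_R\equiv 1$ on $|y|\leq\delta$. A short Fubini argument with $\int(1-\cos\langle y,x\rangle)\,dR(x)$ shows this forces $R=\delta_0$, hence $R_{m,n}\to\delta_0$ weakly and therefore $\varphi_{R_{m,n}}\to 1$ uniformly on \emph{every} compact set. Since $\varphi_{G_n}=\varphi_{G_m}\varphi_{R_{m,n}}$, the sequence $\varphi_{G_n}$ is then uniformly Cauchy on all spheres $|y|\leq a$, and Theorem~\ref{Levy}(i) applies directly to give weak convergence of $G_n$. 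This is essentially the route taken in \cite{JW}.
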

\begin{proof}
See \cite[Theorem 1 and footnote on page 53]{JW}.
\end{proof}
 
We now have all the ingredients needed for the proof of our main theorem. 
 
\begin{proof}[Proof of Theorem \ref{mainthrm}] 
 The proof comes as an application of Lemma \ref{mainlemma2} together with Proposition \ref{mainprop1} and Proposition \ref{mainprop2}.
 In Proposition \ref{mainprop1} we show that
\begin{equation*}
\lim_{Y\to\infty}\frac{1}{\mathcal{N}^{*}(Y)} \sum_{{{c}}\in \mathcal{C}}^{\star}\exp\left(i(y_1\Re(\mathcal{L}(s,\chi_{c}))+y_2 \Im(\mathcal{L}(s,\chi_{c})))\right)\exp(-\norm({c})/Y)=\widetilde{M_s}(y_1, y_2)
\end{equation*} 
uniformly on any sphere $|y|\leq a$, where $\widetilde{M_s}(y_1, y_2)$  is first obtained as the following Dirichlet series (see Section \ref{sec:prop1} for derivation and definition of $\lambda_y$):

{\small\begin{align*}\widetilde{M}_{s}(y_1,y_2)&= \sum_{r_{1},r_{2}>0}\frac{\lambda_{\overline{y}}((1-\zeta_{3})^{r_1})\lambda_{y}((1-\zeta_{3})^{r_2})}{3^{r_1s+r_2\overline{s}}}
\\&\hspace{2em}\times\sum_{\substack{\fa,\fb,\fm\subset\mathfrak{O}_{k}\\\gcd(\fa\fb\fm,(3))=1\\\gcd(\fa,\fb)=1}}\frac{\lambda_{\overline{y}}(\fa^3\fm)\lambda_{y}(\fb^3\fm)}{\norm(\fa)^{3s}\norm(\fb)^{3\overline{s}}\norm(\fm)^{2\sigma}\displaystyle{\prod_{\substack{\fp|\fa\fb\fm\\\fp\; \text{prime}}}\left(1+\norm(\fp)^{-1}\right)}}.
\end{align*}}
Following the computations in  \cite[Section~4.6]{AH}, one can verify that $\widetilde{M}_{s}(y_1,y_2)$ admits the product expansion given by {\small \begin{align}\label{prod1}
\widetilde{M}_{s}(y_1,y_2)&= \exp\left(-i\Re\left(\bar{y}\log\left(1-3^{-s}\right)\right)\right)\nonumber\\&\hspace{1em}\times\prod_{\fp\nmid\langle3\rangle}
\left( \frac{1}{\norm(\fp)+1}+\frac{1}{3}\frac{\norm(\fp)}{\norm(\fp)+1}\sum_{j=0}^{2}\exp\left(-i\Re\left(\bar{y}\log\left(1-\frac{\zeta_{3}^{j}}{\norm(\fp)^{s}}\right)\right)\right) \right)
\end{align}}
 in (Case 1) and 
{\small\begin{align}\label{prod2}
\widetilde{M}_{s}(y_1,y_2)&= \exp\left(-i\Re\left(\frac{\bar{y}\log 3}{3^{s}-1}\right)\right)\nonumber\\&\hspace{1em}\times\prod_{\fp\nmid\langle3\rangle} \left( \frac{1}{\norm(\fp)+1}+\frac{1}{3}\frac{\norm(\fp)}{\norm(\fp)+1}\sum_{j=0}^{2}\exp\left(-i\Re\left(\frac{\bar{y}\zeta_{3}^{j}   \log\norm(\fp)    }{\norm(\fp)^{s}-\zeta_{3}^{j}}\right)\right) \right)
\end{align}}
in (Case 2). It follows from Lemma \ref{mainlemma2} that $\mathcal{L}(s,\chi_c)$ has an asymptotic distribution function $F_{s}$ with the characteristic function $\varphi_{F_s}= \widetilde{M}_s$. 
In view of  Proposition \ref{mainprop2}, we also see that Lemma \ref{mainlemma2} establishes the existence of a smooth probability density function $M_s(t_1, t_2)$ for which $F_s(z_1, z_2)=\int_{-\infty}^{z_1} \int_{-\infty}^{z_2} M_s(t_1, t_2) \;dt_1 dt_2$, and 
 \begin{equation*}
M_s(z_1, z_2)=(1/2\pi)^2\int_{\mathbb{R}^2}\exp\left(-i(z_1y_1+z_2y_2)\right)\widetilde{M}_s(y_1, y_2)\;dy_1 dy_2.
\end{equation*}
One can verify that $\overline{M_{s}(t_1,t_2)}=M_{s}(t_1,t_2)$, i.e.,  $M_s$ is real, using the fact that $\overline{\varphi_{F_s}(y_1,y_2)}=\varphi_{F_{s}}(-y_1,-y_2)$. 
That $M_{s}(t_1,-t_2)=M_{\bar{s}}(t_1,t_2)$ follows from 
the identity 
$\varphi_{F_s}(y_1,-y_2)=\varphi_{F_{\bar{s}}}(y_1,y_2)$. This establishes parts (i) and (iii).

In order to get part  (ii) in which $F_{s}$ is given as an infinite convolution, we apply Theorem \ref{infinite-convolution} in addition to the observation that 
\begin{equation*}
\varphi_{F_s}=\prod_{\fp}\varphi_{{F_s},\fp},
\end{equation*}
 where the local factors $\varphi_{{F_s},\fp}$  are determined by the product formulae  \eqref{prod1} and \eqref{prod2}. See \cite[Proof of Theorem 1.4]{AH} for details.
\end{proof}

\section{Proof of Proposition \ref{mainprop1}}\label{sec:prop1}

The arguments in the proof of \cite[Proposition 1.10]{AH} can be followed {\it mutatis mutandis} to establish Proposition \ref{mainprop1}. In this section, we provide an outline for the proof. We refer the reader to the corresponding parts in \cite{AH} for all the details while highlighting the differences and the necessary adjustments whenever needed.

In what follows we only consider (Case 2) since (Case 1) can be treated similarly. We start by making the following simple but crucial observation. For $y=y_1+iy_2$, we set 
\begin{align*}I_{y}(s,\chi_c)&=\exp\left(i(y_1\Re(\mathcal{L}(s,\chi_{c}))+y_2 \Im(\mathcal{L}(s,\chi_{c})))\right)\\&=\exp\left(\frac{i}{2}(\overline{y}\mathcal{L}(s,\chi_{c}))+y \overline{\mathcal{L}(s,\chi_{c})})\right)\\&=\exp\left(\frac{i}{2}(\overline{y}\mathcal{L}(s,\chi_{c}))\right)\exp\left(\frac{i}{2}(y \overline{\mathcal{L}(s,\chi_{c})})\right).\end{align*}
 For $\Re(s)>1$, employing the Euler product representation of $L(s,\chi_{c})$ yields
 \begin{align*}\exp\left(\frac{i}{2}(y\mathcal{L}(s,\chi_{c}))\right)
 &=\sum_{\fa\subset\ringO_{k}}\frac{\lambda_{y}(\fa)\chi_{c}(\fa)}{\norm(\fa)^s},\end{align*}  where $\lambda_{y}$ is an arithmetic function defined on the integral ideals of $k$ as 
 follows: \[\lambda_y(\fa)=\prod_{\mathfrak{p}} \lambda_y({\mathfrak{p}}^{\alpha_\mathfrak{p}}),\quad\quad\lambda_y(\mathfrak{p} ^{\alpha_\mathfrak{p}} )=G_{\alpha_\mathfrak{p}} \left(-iy\log{\norm(\mathfrak{p})}\right), \] and the function $G_{r}(u)$ is determined using the generating series 
 \begin{equation*}
\exp\left(\frac{ut}{1-t}\right)=\sum_{r=0}^{\infty}G_{r}(u)t^r,
\end{equation*}
for $u,t\in\mathbb{C}$ with $|t|<1$. The reader is referred to the proof of \cite[Lemma 4.1]{AH} for a detailed description of $\lambda_{y}$ in (Case 1) and (Case 2).

 This allows us to write $I_{y}(s,\chi_c)$ for $\Re(s)>1$ as an absolutely convergent Dirichlet series. More precisely, we get 
 \begin{align*}
 I_{y}(s,\chi_c)&=\sum_{\fa,\fb\subset\ringO_k}\frac{\lambda_{\overline{y}}(\fa)\lambda_{y}(\fb)\chi_{c}(\fa\fb^2)}{\norm(\fa)^{s}\norm(\fb)^{\overline{s}}}.
 \end{align*}

 For fixed $A>0$ and sufficiently small $\epsilon>0$, let $R_{Y, \epsilon, A}$ be the rectangle with  the vertices $1+i(\log{Y})^{2A}$, $(1+\epsilon)/2+i(\log{Y})^{2A}$, $(1+\epsilon)/2-i(\log{Y})^{2A}$, and  $1-i(\log{Y})^{2A}$. We say that an element $c\in \mathcal{C}$ is in $\mathcal{Z}^{\mathrm{c}}$, if $L(s, \chi_c)$ does not have a zero in $R_{Y, \epsilon, A}$. Otherwise, it is in $\mathcal{Z}$. Thus, $\mathcal{C}=\mathcal{Z}\cup \mathcal{Z}^{\mathrm{c}}$. Note that $\mathcal{Z}$ and $\mathcal{Z}^{\mathrm{c}}$ depend on $Y$, $\epsilon$, and $A$.
 
 For $s$ and $y$ as in Proposition \ref{mainprop1}, we have
\begin{equation}
\label{main}
\sum_{c\in \mathcal{C}}^{\star}I_{y}(s,\chi_c)\exp(-\norm(c)/Y)= \sum_{c\in \mathcal{Z}^{\mathrm{c}}}^{\star}+ \sum_{c\in \mathcal{Z}}^{\star}=\sum_{c\in \mathcal{Z}^{\mathrm{c}}}+ \sum_{c\in \mathcal{Z}}^{\star}.
\end{equation}
Observe that  $\left|I_{y}(s,\chi_c)\right|=1$. Thus, by \cite[Lemma~4.3]{AH}, an application of a zero-density estimate,  we have
$$\sum_{c\in \mathcal{Z}}^{\star}I_{y}(s,\chi_c)\exp(-\norm(c)/Y) \ll \sum_{c\in \mathcal{Z}}^{\star}\exp(-\norm(c)/Y)\ll Y^\delta,$$
for some $\delta<1$.
The application of this estimate in \eqref{main} yields
\begin{equation}
\label{main1}
\sum_{c\in \mathcal{C}}^{\star}I_{y}(s,\chi_c)\exp(-\norm(c)/Y)= \sum_{c\in \mathcal{Z}^{\mathrm{c}}} I_{y}(s,\chi_c)\exp(-\norm(c)/Y)+ O(Y^\delta).
\end{equation}

For $c\in\mathcal{Z}^{\mathrm{c}}$, we express $I_{y}(s,\chi_c)$ as the sum of an infinite series with rapidly decaying terms and a certain contour integral. In fact, following the proof of \cite[Lemma~4.3]{AH}, we get
 
\begin{align}\label{main2}I_{y}(s,\chi_c)&= \sum_{\fa,\fb\subset\mathfrak{O}_{k}}\frac{\lambda_{\overline{y}}(\fa)\lambda_{y}(\fb)\chi_{c}(\fa\fb^{2})}{\norm(\fa)^{s}\norm(\fb)^{\overline{s}}}\exp\left(-\frac{\norm(\fa\fb)}{X}\right)\nonumber\\&\hspace{2em}-\frac{1}{2\pi i} \int_{L_{Y, \epsilon, A}}\exp\left(i\frac{\overline{y}}{2}\mathcal{L}(s+u,\chi_c)\right)\exp\left(i\frac{y}{2}\mathcal{L}(\overline{s}+u,\overline{\chi_c})\right)\Gamma(u) X^u du, \end{align}
where $L_{Y, \epsilon, A}=L_1+L_2+L_3+L_4+L_5$. Let $\sigma=\Re(s)$. Then here $L_1$ is the vertical half-line given by $(1-\sigma+\frac{\epsilon}{2})+it(\log{Y})^A$ for $t\geq 1$, $L_2$ is the horizontal line segment given by $t+i(\log{Y})^A$ for $-\frac{\epsilon}{2} \leq t \leq (1-\sigma+\frac{\epsilon}{2})$, $L_3$ is the vertical line-segment given by  $-\frac{\epsilon}{2}+it(\log{Y})^A$ for $-1\leq t\leq 1$, $L_4$ is the horizontal line segment given by $t-i(\log{Y})^A$ for $-\frac{\epsilon}{2} \leq t \leq (1-\sigma+\frac{\epsilon}{2})$, and $L_5$ is the vertical half-line given by $(1-\sigma+\frac{\epsilon}{2})+it(\log{Y})^A$ for $t\leq -1$.

Combining \eqref{main1} and \eqref{main2} yields
\begin{equation}
\label{main3}
\sum_{c\in \mathcal{C}}^{\star}I_y(s,\chi_c)\exp(-\norm(c)/Y)= (I)-(II)+(III)+ O(Y^\delta),
\end{equation}
where 
\begin{align*}
(I)&=\sum _{c\in \mathcal{C}} \left ( \sum_{\fa,\fb\subset\mathfrak{O}_{k}}\frac{{\lambda}_{\overline{y}}(\fa)\lambda_{y}(\fb)\chi_{c}(\fa\fb^{2})}{\norm(\fa)^{{s}}\norm(\fb)^{\overline{s}}}\exp\left(-\frac{\norm(\fa\fb)}{X}\right)\right) \exp\left(-\frac{\norm(c)}{Y}\right),
\end{align*}
\begin{align*}
(II)&= \sum _{c\in \mathcal{Z}} \left ( \sum_{\fa,\fb\subset\mathfrak{O}_{k}}\frac{{\lambda}_{\overline{y}}(\fa)\lambda_{y}(\fb)\chi_{c}(\fa\fb^{2})}{\norm(\fa)^{{s}}\norm(\fb)^{\overline{s}}}\exp\left(-\frac{\norm(\fa\fb)}{X}\right)\right) \exp\left(-\frac{\norm(c)}{Y}\right),
\end{align*}
and
\begin{align*}
(III)&= \sum _{c\in \mathcal{Z}^{\mathrm{c}}} \left ( -\frac{1}{2\pi i} \int_{L_{Y, \epsilon, A}} \exp\left(i\frac{\overline{y}}{2}\mathcal{L}(s+u,\chi_c)\right)\exp\left(i\frac{y}{2}\mathcal{L}(\overline{s}+u,\overline{\chi_c})\right)\Gamma(u) X^u du
\right)\\&\hspace{4em}\times \exp\left(-\frac{\norm(c)}{Y}\right).
\end{align*}

 The calculations in Sections 4.2, 4.3 and 4.4 in \cite{AH} apply with very minor and obvious changes to evaluate the sums  $(I)$, $(II)$ and $(III)$ respectively. It suffices to mention that $\widetilde{M}_{s}(y_1,y_2)$ comes from the contribution of cubes to $(I)$. In fact, following the calculations in \cite[Section~4.2]{AH} we get
 
 \[(I)=\frac{3\mathrm{res}_{s=1}\zeta_k(s)}{4|H_{\langle 9\rangle}|\zeta_k(2)}Y\widetilde{M}_{s}(y_1,y_2)+o(Y),\]
 where
 
\begin{align*}\widetilde{M}_{s}(y_1,y_2)&=\sum_{r_{1},r_{2}>0}\frac{\lambda_{\overline{y}}((1-\zeta_{3})^{r_1})\lambda_{y}((1-\zeta_{3})^{r_2})}{3^{r_1s+r_2\overline{s}}}\\&\hspace{2em}\times\sum_{\substack{\fa,\fb,\fm\subset\mathfrak{O}_{k}\\\gcd(\fa\fb\fm,(3))=1\\\gcd(\fa,\fb)=1}}\frac{\lambda_{\overline{y}}(\fa^3\fm)\lambda_{y}(\fb^3\fm)}{\norm(\fa)^{3s}\norm(\fb)^{3\overline{s}}\norm(\fm)^{2\sigma}\displaystyle{\prod_{\substack{\fp|\fa\fb\fm\\\fp\; \text{prime}}}\left(1+\norm(\fp)^{-1}\right)}}\\&=\exp\left(-i \Re\left(\frac{y\log 3}{3^{\overline{s}}-1}\right)\right)\\&\hspace{2em}\times\sum_{\substack{\fa,\fb,\fm\subset\mathfrak{O}_{k}\\\gcd(\fa\fb\fm,(3))=1\\\gcd(\fa,\fb)=1}}\frac{\lambda_{\overline{y}}(\fa^3\fm)\lambda_{y}(\fb^3\fm)}{\norm(\fa)^{3s}\norm(\fb)^{3\overline{s}}\norm(\fm)^{2\sigma}\displaystyle{\prod_{\substack{\fp|\fa\fb\fm\\\fp\; \text{prime}}}\left(1+\norm(\fp)^{-1}\right)}}.
\end{align*}
One can then verify that $\widetilde{M}_{s}(y_1,y_2)$ admits the product expansion given in Theorem \ref{mainthrm} (see \cite[Section~4.6]{AH} for details). Combining the estimates obtained for $(I)$, $(II)$ and $(III)$ as  done in \cite[Section~4.5]{AH} yields
 \[\sum_{c\in \mathcal{C}}^{\star}I_y(s,\chi_c)\exp(-\norm(c)/Y)=\frac{3\mathrm{res}_{s=1}\zeta_k(s)}{4|H_{\langle 9\rangle}|\zeta_k(2)}Y\widetilde{M}_s(y_1,y_2)+o(Y), \quad\quad\text{as}\;Y\to\infty.\]
 Observe that the convergence is uniform on any sphere $|y|\leq a$. This is mainly due to the estimate $\lambda_{y}(\fa)\ll_{\epsilon,a}\norm(\fa)^{\epsilon}$ (see \cite[p.~92]{I-M}) which holds for any $\epsilon>0$ and all $|y|\leq a$ where $a$ is a positive real number.

Finally, by noting that $\mathcal{N}^{*}(Y)= \frac{3\mathrm{res}_{s=1}\zeta_k(s)}{4|H_{\langle 9\rangle}|\zeta_k(2)}Y+O(Y^{\frac12+\epsilon})$ (see \cite[Lemma~3.3]{AH}), we complete the proof of Proposition \ref{mainprop1}.

\section{Proof of Proposition \ref{mainprop2}}\label{sec:prop2}
In this section we prove Proposition \ref{mainprop2} in (Case 2). The proof of the proposition in (Case 1) is similar.  
 
We will  show that 
\begin{align*}
\widetilde{M}_s(y_1, y_2)&= \exp\left(-i\Re\left(\frac{\bar{y}\log 3}{3^{s}-1}\right)\right)\\&\hspace{1em}\times\prod_{\fp\nmid\langle3\rangle} \left( \frac{1}{\norm(\fp)+1}+\frac{1}{3}\frac{\norm(\fp)}{\norm(\fp)+1}\sum_{j=0}^{2}\exp\left(-i\Re\left(\frac{\bar{y}\zeta_{3}^{j}   \log\norm(\fp)    }{\norm(\fp)^{s}-\zeta_{3}^{j}}\right)\right) \right)
\end{align*}
satisfies $\left|\widetilde{M}_s(y_1, y_2)\right|\leq \exp\left(-C|y|^{\frac{1}{\sigma}}\left(\log\left| y\right|\right)^{\frac{1}{\sigma}-1}\right)$, as $|y|\to\infty$. 

Let $\fp$ be a prime ideal such that  $\fp\nmid\langle3\rangle$ and consider the corresponding Euler factor 
\begin{equation*}
\widetilde{M}_{s,\fp}(y_1, y_2)=\frac{1}{\norm(\fp)+1}+\frac{1}{3}\frac{\norm(\fp)}{\norm(\fp)+1}\sum_{j=0}^{2}\exp\left(-i\Re\left(\frac{\bar{y}\zeta_{3}^{j}   \log\norm(\fp)    }{\norm(\fp)^{s}-\zeta_{3}^{j}}\right)\right).
\end{equation*}
 Using the Maclaurin series of the function $f(x)=\frac{x}{1-x}$ for $|x|<1$ and noting that \\ $|e^{ib}-e^{ia}|\leq|b-a|$, we get 
\begin{align}\label{eqn:Fp}
\widetilde{M}_{s,\fp}(y_1, y_2)&= G_{\fp}(y,s)+O\left(\frac{|y|\log\norm(\fp)}{\norm(\fp)^{2\sigma}}\right),
\end{align}
where \begin{align*}G_{\fp}(y,s)&=\frac{1}{\norm(\fp)+1}+\frac{\norm(\fp)}{3(\norm(\fp)+1)}\exp\left(-i\log\norm(\fp)\frac{y_1\Re(\norm(\fp)^{-it})+y_2\Im(\norm(\fp)^{-it})}{\norm(\fp)^{\sigma}}\right)\\&\hspace{1em}+\frac{\norm(\fp)}{3(\norm(\fp)+1)}\exp\left(-i\log\norm(\fp)\frac{y_1\Re(\zeta_3\norm(\fp)^{-it})+y_2\Im(\zeta_3\norm(\fp)^{-it})}{\norm(\fp)^{\sigma}}\right)\\&\hspace{1em}+\frac{\norm(\fp)}{3(\norm(\fp)+1)}\exp\left(-i\log\norm(\fp)\frac{y_1\Re(\zeta_3^2\norm(\fp)^{-it})+y_2\Im(\zeta_3^2\norm(\fp)^{-it})}{\norm(\fp)^{\sigma}}\right).\end{align*}

Before we proceed further, we need the following lemma, the proof of which will be given at the end of this section.
\begin{lemma}\label{lem:Gbound}
Let $\varepsilon>0$ be given. Set $a_{\sigma,\varepsilon}(y)=\frac{|y|\log|y|}{\sigma(\frac{\pi}{6}-\varepsilon)}$ and $b_{\sigma,\varepsilon}(y)=\frac{|y|\log|y|}{\sigma(\frac{\pi}{9({\sqrt{3}}/{2})}+\varepsilon)}$. As $|y|\to\infty$, we have $0.097<|G_{\fp}(y,s)|<0.9978$
  for all prime ideals $\fp$ with $a_{\sigma,\varepsilon}(y)\leq\norm(\fp)^{\sigma}\leq b_{\sigma,\varepsilon}(y)$.\end{lemma}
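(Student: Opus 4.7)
The plan is to reduce the estimation of $|G_\fp(y,s)|$ to a two-parameter trigonometric sum and then bound that sum uniformly in one of the parameters. Write $p=\norm(\fp)$, $y=|y|e^{i\phi}$, and $t=\Im(s)$. The elementary identity $y_1\cos\alpha+y_2\sin\alpha=|y|\cos(\alpha-\phi)$, applied with $\alpha=2\pi j/3-t\log p$, recasts the formula for $G_\fp$ displayed in the excerpt as
\[
G_\fp(y,s)=\frac{1}{p+1}+\frac{p}{3(p+1)}\sum_{j=0}^{2}e^{-iAc_j},
\]
where $A:=|y|\log p/p^{\sigma}$, $c_j:=\cos(2\pi j/3-\psi)$, and $\psi:=t\log p+\phi$. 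Consequently $|G_\fp(y,s)|=|S|/3+O(1/p)$ with $S:=\sum_j e^{-iAc_j}$. The triple $(c_0,c_1,c_2)$ satisfies $\sum_j c_j=0$ and $\sum_j c_j^2=\tfrac{3}{2}$ for every $\psi$, hence also $\sum_{j<k}(c_j-c_k)^2=\tfrac{9}{2}$; these identities will power the whole argument.

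The first step is to convert the hypothesis $a_{\sigma,\varepsilon}(y)\le p^\sigma\le b_{\sigma,\varepsilon}(y)$ into uniform bounds on $A$. Because $x\mapsto\log x/x$ is decreasing on $[e,\infty)$, one sees that $A=|y|\log(p^\sigma)/(\sigma p^\sigma)$ is monotone in $p^\sigma$, and plugging the explicit form of $a_{\sigma,\varepsilon},b_{\sigma,\varepsilon}$ together with the asymptotic $\log(|y|\log|y|)=(1+o(1))\log|y|$ yields
\[
\tfrac{2\pi}{9\sqrt{3}}+\varepsilon+o(1)\;\le\;A\;\le\;\tfrac{\pi}{6}-\varepsilon+o(1)
\]
uniformly as $|y|\to\infty$. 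The specific numerators $\pi/6$ and $2\pi/(9\sqrt{3})$ appearing in $a_{\sigma,\varepsilon},b_{\sigma,\varepsilon}$ are chosen precisely so that the resulting range of $A$ lies in the interior of the interval where $|S|$ is well-controlled.

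The second step is to estimate $|S|^2=3+2\sum_{j<k}\cos(A(c_j-c_k))$ uniformly in $\psi$. Since $|A(c_j-c_k)|\le 2A<\pi/3$, Taylor's theorem together with $\sum_{j<k}(c_j-c_k)^2=\tfrac{9}{2}$ delivers
\[
|S|^2=9-\tfrac{9}{2}A^2+O(A^4),
\]
with implied constant independent of $\psi$ (one verifies similarly that $\sum_{j<k}(c_j-c_k)^4=\tfrac{81}{8}$ is also $\psi$-free, so the $O(A^4)$ term can be made explicit). Together with the range of $A$ from the previous step, this produces uniform two-sided bounds $s_-\le|S|\le s_+$ with $s_-/3,s_+/3\in(0,1)$; since $|G_\fp|=|S|/3+O(1/p)$ and $p\to\infty$ as $|y|\to\infty$, the inequalities $0.097<|G_\fp|<0.9978$ then follow by a direct numerical substitution of the endpoints of the $A$-range into the above expansion.

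The main technical hurdle will be genuine uniformity in $\psi$, since a pointwise estimate of $|S|$ degenerates whenever some $c_j$ is close to zero. The remedy is Fourier cancellation for cubic roots of unity: the sum $\sum_{j=0}^{2}e^{4\pi i m j/3}$ equals $3$ when $3\mid m$ and $0$ otherwise, which forces $\sum_j c_j^{2m}$ (and more generally $\sum_{j<k}(c_j-c_k)^{2m}$) to be $\psi$-constant for $m=1,2$, pushing the first $\psi$-dependent contribution to $|S|^2$ out to order $A^6\cos(6\psi)$, which is negligible on our range of $A$. It is this structural symmetry of the triple $(c_j)$ that ultimately delivers the uniform bounds.
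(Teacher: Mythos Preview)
Your approach is correct and takes a genuinely different route from the paper's.  After the same initial reduction to $G_\fp=\tfrac{1}{p+1}+\tfrac{p}{3(p+1)}S$ with $S=\sum_j e^{-iAc_j}$, the paper proceeds by a case analysis on the angle $\psi=\theta_y+t\log\norm(\fp)$: it partitions $\RR/2\pi\ZZ$ into two sets $A$ and $B$ (according to whether $|\cos\psi|$ or $|\sin\psi|$ dominates), and in each case bounds the two trigonometric factors appearing in $|H_\fp|=\sqrt{3+2J_\fp}$ by hand to obtain the stated constants $0.097$ and $0.9978$.  You instead avoid the case split entirely by Taylor-expanding $|S|^2=3+2\sum_{j<k}\cos\!\big(A(c_j-c_k)\big)$ and observing that the second and fourth moments $\sum_{j<k}(c_j-c_k)^2=\tfrac{9}{2}$ and $\sum_{j<k}(c_j-c_k)^4=\tfrac{81}{8}$ are $\psi$-free, so that $|S|^2=9-\tfrac{9}{2}A^2+\tfrac{27}{32}A^4+O(A^6)$ uniformly.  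Substituting the endpoints $A\in[\tfrac{2\pi}{9\sqrt3},\tfrac{\pi}{6}]$ actually yields the considerably sharper range $0.93\lesssim|S|/3\lesssim0.96$, which of course implies the lemma's looser constants; the paper's particular numbers are artifacts of its case-by-case estimates.  Your method is cleaner and more robust, while the paper's is more hands-on and makes the source of each numerical constant explicit.

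One small point: your Fourier-cancellation heuristic directly gives $\psi$-independence of the \emph{even} power sums $p_{2}=\sum_j c_j^{2}$ and $p_{4}=\sum_j c_j^{4}$, but the odd sum $p_3=\sum_j c_j^{3}=\tfrac{3}{4}\cos(3\psi)$ is already $\psi$-dependent.  The reason $\sum_{j<k}(c_j-c_k)^{4}$ is nonetheless $\psi$-free is that in the expansion $\sum_{j,k}(c_j-c_k)^4=2p_0p_4-8p_1p_3+6p_2^{2}$ the dangerous $p_3$ enters only through $p_1p_3$, and $p_1=0$; it would be worth stating this explicitly rather than attributing it purely to the root-of-unity cancellation.
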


We now use Lemma \ref{lem:Gbound} to establish the proof of Proposition \ref{mainprop2} in (Case 2). It is clear that $\left|\widetilde{M}_{s,\fp}(y_1, y_2)\right|\leq1$ for all $\fp$. Using \eqref{eqn:Fp}, we get, as $|y| \rightarrow \infty$,
\begin{align}\label{eqn:M}
\left|\widetilde{M}_{s}(y_1, y_2)\right|&\leq\prod_{\substack{\fp\\ a_{\sigma,\varepsilon}(y)\leq\norm(\fp)^{\sigma}\leq b_{\sigma,\varepsilon}(y)}}\left|\widetilde{M}_{s,\fp}(y_1, y_2)\right|\nonumber\\&=\prod_{\substack{\fp\\ a_{\sigma,\varepsilon}(y)\leq\norm(\fp)^{\sigma}\leq b_{\sigma,\varepsilon}(y)}}\left|G_{\fp}(y,s)+O\left(\frac{|y|\log\norm(\fp)}{\norm(\fp)^{2\sigma}}\right)\right|\nonumber\\&=
\prod_{\substack{\fp\\ a_{\sigma,\varepsilon}(y)\leq\norm(\fp)^{\sigma}\leq b_{\sigma,\varepsilon}(y)}}\left|G_{\fp}(y,s)\right|\nonumber\\&\hspace{6em}\times\prod_{\substack{\fp\\ a_{\sigma,\varepsilon}(y)\leq\norm(\fp)^{\sigma}\leq b_{\sigma,\varepsilon}(y)}}\left(1+O\left(\frac{|y|\log\norm(\fp)}{\norm(\fp)^{2\sigma}}\right)\right),
\end{align}
where the last equality follows from the fact that $G_{\fp}(y,s)$ is bounded away from zero in the specified range for $\norm(\fp)$ as seen in Lemma \ref{lem:Gbound}. Notice that 
\begin{equation}\label{eqn:prod}\prod_{\substack{\fp\\ a_{\sigma,\varepsilon}(y)\leq\norm(\fp)^{\sigma}\leq b_{\sigma,\varepsilon}(y)}}\left(1+O_\sigma\left(\frac{|y|\log\norm(\fp)}{\norm(\fp)^{2\sigma}}\right)\right)=\exp\left(O\left(|y|^{-1+\frac{1}{\sigma}}\log|y|\right)\right),\end{equation}
as $|y|\to\infty$. Moreover,  we have \[\Pi_{\sigma}(y)=\Pi\left((b_{\sigma,\varepsilon}(y))^{\frac{1}{\sigma}}\right)-\Pi\left((a_{\sigma,\varepsilon}(y))^{\frac{1}{\sigma}}\right)\gg_{\sigma,\varepsilon}|y|^{\frac{1}{\sigma}}\left(\log\left| y\right|\right)^{\frac{1}{\sigma}-1},\quad\quad\text{as}\;y\to\infty,\]
where $\Pi(x)$ is the number of prime ideals in $\ringO_{k}$ whose norm is less than or equal to $x$. 

Using this observation along with the upper bound for $G_{\fp}(y,s)$ given in Lemma \ref{lem:Gbound}, we see that for $|y|$ large enough, we have \begin{align}\label{eqn:Gbound}\prod_{\substack{\fp\\ a_{\sigma,\varepsilon}(y)\leq\norm(\fp)^{\sigma}\leq b_{\sigma,\varepsilon}(y)}}\left|G_{\fp}(y,s)\right|&\leq\exp\left(\Pi_{\sigma}(y)\log(0.9978)\right)\nonumber\\&\leq\exp\left(-C|y|^{\frac{1}{\sigma}}\left(\log\left| y\right|\right)^{\frac{1}{\sigma}-1}\right),\end{align} where $C$ is a positive constant that depends only on $\sigma$ and $\varepsilon$.
Therefore, applying the estimates \eqref{eqn:prod} and \eqref{eqn:Gbound} in \eqref{eqn:M} yields
\[\left|\widetilde{M}_{s}(y_1,y_2)\right|\leq\exp(-C|y|^{\frac{1}{\sigma}}\left(\log\left| y\right|\right)^{\frac{1}{\sigma}-1}),\] as desired.

Finally, we prove Lemma \ref{lem:Gbound}.
\begin{proof}[Proof of Lemma \ref{lem:Gbound}] 
We write
\begin{align*}
G_{\fp}(y,s)&=\frac{1}{\norm(\fp)+1}\\&\hspace{2em}+\frac{\norm(\fp)}{3(\norm(\fp)+1)}\exp\left(-i\log\norm(\fp)\frac{y_1\Re(\norm(\fp)^{-it})+y_2\Im(\norm(\fp)^{-it})}{\norm(\fp)^{\sigma}}\right)H_{\fp}(y,s),
\end{align*} where
\begin{align*}H_{\fp}(y,s)&=1+\exp\left(-i\log\norm(\fp)\frac{y_1\Re((\zeta_3-1)\norm(\fp)^{-it})+y_2\Im((\zeta_3-1)\norm(\fp)^{-it})}{\norm(\fp)^{\sigma}}\right)\\&\hspace{2em}+\exp\left(-i\log\norm(\fp)\frac{y_1\Re((\zeta_3^2-1)\norm(\fp)^{-it})+y_2\Im((\zeta_3^2-1)\norm(\fp)^{-it})}{\norm(\fp)^{\sigma}}\right).\end{align*}
Observe that $\zeta_3-1=\sqrt{3}e^{-i\frac{\pi}{6}}$ and $\zeta_3^2-1=\sqrt{3}e^{i\frac{\pi}{6}}$. Let $y_1=|y|\Re(e^{i\theta_y})$ and $y_2=|y|\Im(e^{i\theta_y})$ for some $\theta_y\in[0,2\pi]$. It follows that

\begin{align*}
H_{\fp}(y,s)
&=1+\exp\left(-i\log\norm(\fp)\frac{\sqrt{3}|y|}{\norm(\fp)^{\sigma}}\cos(\theta_y+\tfrac{\pi}{6}+t\log\norm(\fp))\right)\\&\hspace{2em}+\exp\left(-i\log\norm(\fp)\frac{\sqrt{3}|y|}{\norm(\fp)^{\sigma}}\cos(\theta_y-\tfrac{\pi}{6}+t\log\norm(\fp))\right).\end{align*}
Using the identity 
\[|1+e^{iax}+e^{ibx}|=\sqrt{3+4\cos(\frac{a+b}{2}x)\cos(\frac{a-b}{2}x)+2\cos((a-b)x)},\] we have 
\begin{align*}\left|G_{\fp}(y,s)\right|&\leq\frac{1}{\norm(\fp)+1}+\frac{\norm(\fp)}{3(\norm(\fp)+1)}|H_{\fp}(y,s)|,\end{align*} where we can write \begin{align*}|H_{\fp}(y,s)|&=\sqrt{3+2J_{\fp}(y,s)},\end{align*}
with \begin{align*}J_{\fp}(y,s)&=2\cos\left(\tfrac{3|y|\log\norm(\fp)}{2\norm(\fp)^{\sigma}}\cos(\theta_y+t\log\norm(\fp))\right)\cos\left(\tfrac{\sqrt{3}|y|\log\norm(\fp)}{2\norm(\fp)^{\sigma}}\sin(\theta_y+t\log\norm(\fp))\right)\\&\hspace{2em}+\cos\left(\tfrac{\sqrt{3}|y|\log\norm(\fp)}{\norm(\fp)^{\sigma}}\sin(\theta_y+t\log\norm(\fp))\right).\end{align*}
For $|y|$ large enough, we are interested in the prime ideals $\fp$ for which 
\[H_{\fp}(y,s)<3.\]
To this end, we show that for a positive proportion of primes we have 
\begin{equation}\label{eqn:bound<1}\cos\left(\tfrac{3|y|\log\norm(\fp)}{2\norm(\fp)^{\sigma}}\cos(\theta_y+t\log\norm(\fp))\right)\cos\left(\tfrac{\sqrt{3}|y|\log\norm(\fp)}{2\norm(\fp)^{\sigma}}\sin(\theta_y+t\log\norm(\fp))\right)<1.\end{equation}
We  must also show that for those prime ideals, we have $|G_{\fp}(y,s)|$ is bounded away from zero.\\

Consider the following sets 
\[A=\bigcup_{N=0}^{\infty}\left(\left(2N\pi-\frac{\pi}{6},2N\pi+\frac{\pi}{6}\right)\cup\left(2N\pi+\frac{5\pi}{6},2N\pi+\frac{7\pi}{6}\right)\right)\]
and 
\[B=\bigcup_{N=0}^{\infty}\left(\left[2N\pi+\frac{\pi}{6},2N\pi+\frac{5\pi}{6}\right]\cup\left[2N\pi+\frac{7\pi}{6},2N\pi+\frac{11\pi}{6}\right]\right).\]
Now let $\fp$ be such that 
\begin{equation}\label{eqn:zp-condition}
\frac{\pi}{9(\sqrt{3}/2)}<\frac{|y|\log\norm(\fp)}{\norm(\fp)^{\sigma}}<\frac{\pi}{6}.
\end{equation}
Given a prime ideal $\fp$ that satisfies \eqref{eqn:zp-condition}, we consider two cases. \\

\noindent\underline{{\bf 1. $\theta_y+t\log(\norm(\fp))\in A$:}}\\

 Assume that $\theta_y+t\log(\norm(\fp))\in \left(2N\pi-\frac{\pi}{6},2N\pi+\frac{\pi}{6}\right)$ for some non-negative integer $N$. It follows that \[\frac{\pi}{6}<\tfrac{3|y|\log\norm(\fp)}{2\norm(\fp)^{\sigma}}\cos(\theta_y+t\log\norm(\fp))<\frac{\pi}{4}.\] Hence, 
\[\frac{\sqrt{2}}{2}<\cos\left(\tfrac{3|y|\log\norm(\fp)}{2\norm(\fp)^{\sigma}}\cos(\theta_y+t\log\norm(\fp))\right)<\frac{\sqrt{3}}{2},\] and so \eqref{eqn:bound<1} holds. In particular, we get
\begin{equation}\label{eq:upperbound1}
|G_{\fp}(y,s)|< 0.9799.
\end{equation}
Moreover, \[0<\frac{\sqrt{3}|y|\log\norm(\fp)}{2\norm(\fp)^{\sigma}}\sin(\theta_y+t\log\norm(\fp))< \frac{\pi\sqrt{3}}{24} ,\] and so 
\[\cos\left(\tfrac{\sqrt{3}|y|\log\norm(\fp)}{2\norm(\fp)^{\sigma}}\sin(\theta_y+t\log\norm(\fp))\right)>\cos(\tfrac{\pi\sqrt{3}}{24}). \]
It follows that $J_{\fp}(y,s)>\sqrt{2}\cos(\tfrac{\sqrt{3}\pi}{24})-1=0.378...$. Hence,
\begin{align}\label{eqn:bounded-away1}|G_{\fp}(y,s)|&\geq \frac{\norm(\fp)}{3(\norm(\fp)+1)}\sqrt{3+2J_{\fp}(y,s)}-\frac{1}{\norm(\fp)+1}\nonumber\\&>\frac{\norm(\fp)}{3(\norm(\fp)+1)}\sqrt{3+2(0.378)}-\frac{1}{\norm(\fp)+1}\nonumber\\&>0.097.\end{align}
 A similar result holds if $\theta_y+t\log(\norm(\fp))\in \left(2N\pi+\frac{5\pi}{6},2N\pi+\frac{7\pi}{6}\right)$ for some non-negative integer $N$ since the cosine function is even.\\

\noindent \underline{ {\bf 2. $\theta_y+t\log(\norm(\fp))\in B$:}}\\

 Assume that $\theta_y+t\log(\norm(\fp))\in \left[2N\pi+\frac{\pi}{6},2N\pi+\frac{5\pi}{6}\right]$ for some non-negative integer $N$. It follows that \[\frac{\pi}{18}<\tfrac{\sqrt{3}|y|\log\norm(\fp)}{2\norm(\fp)^{\sigma}}\sin(\theta_y+t\log\norm(\fp))<\frac{\sqrt{3}\pi}{12}.\] 
Hence, 
\[\cos\left(\frac{\sqrt{3}\pi}{12}\right)<\cos\left(\tfrac{\sqrt{3}|y|\log\norm(\fp)}{2\norm(\fp)^{\sigma}}\sin(\theta_y+t\log\norm(\fp))\right)<\cos\left(\frac{\pi}{18}\right),\] and so \eqref{eqn:bound<1} holds. In particular, we get
\begin{equation}\label{eq:upperbound2}
|G_{\fp}(y,s)|< 0.9978.
\end{equation}
Moreover, \[\left|\frac{3|y|\log\norm(\fp)}{2\norm(\fp)^{\sigma}}\cos(\theta_y+t\log\norm(\fp))\right|< \frac{\sqrt{3}\pi}{8} ,\] and so 
\[\cos\left(\tfrac{3|y|\log\norm(\fp)}{2\norm(\fp)^{\sigma}}\cos(\theta_y+t\log\norm(\fp))\right)>\cos(\tfrac{\sqrt{3}\pi}{8}). \]
It follows that $J_{\fp}(y,s)>2\cos(\tfrac{\sqrt{3}\pi}{8})\cos(\tfrac{\sqrt{3}\pi}{12})-1=0.3977...$. Hence,
\begin{align}\label{eqn:bounded-away2}|G_{\fp}(y,s)|\geq\frac{\norm(\fp)}{3(\norm(\fp)+1)}\sqrt{3+2(0.3977)}-\frac{1}{\norm(\fp)+1}>0.099.\end{align}
If $\theta_y+t\log(\norm(\fp))\in \left[2N\pi+\frac{7\pi}{6},2N\pi+\frac{11\pi}{6}\right]$ for some non-negative integer $N$, a similar result holds since the cosine function is even. \\  

To finish the proof, observe that as $|y|\to\infty$, we have
\begin{equation*}
\frac{|y|\log|y|}{\sigma(\frac{\pi}{6}-\varepsilon)}<\norm(\fp)^{\sigma}<\frac{|y|\log|y|}{\sigma(\frac{\pi}{9(\sqrt{3}/{2})}+\varepsilon)}\quad\implies\quad \frac{\pi}{9(\sqrt{3}/2)}<\frac{|y|\log\norm(\fp)}{\norm(\fp)^{\sigma}}<\frac{\pi}{6},
\end{equation*}
for any $\varepsilon>0$. Setting $a_{\sigma,\varepsilon}(y)=\frac{|y|\log|y|}{\sigma(\frac{\pi}{6}-\varepsilon)}$ and $b_{\sigma,\varepsilon}(y)=\frac{|y|\log|y|}{\sigma(\frac{\pi}{9({\sqrt{3}}/{2})}+\varepsilon)}$, we deduce that for all prime ideals $\fp$ with $a_{\sigma,\varepsilon}(y)\leq\norm(\fp)^{\sigma}\leq b_{\sigma,\varepsilon}(y)$, we have 
$$0.097<|G_{\fp}(y,s)|<0.9978,$$ as $|y|\to\infty$.
\end{proof}

\subsection*{Acknowledgement} The authors would like to thank the referee for the valuable comments and suggestions.

\end{document}